\newcommand{\N}{\mathbb{N}} 
\newcommand{\R}{\mathbb{R}}
\newcommand{\C}{\mathbb{C}}
\newcommand{\curly}[1]{\left\lbrace #1 \right\rbrace}
\newcommand{\mybrace}[1]{\left( #1 \right)}
\newcommand{\sd}[1]{\ \left| \ #1 \right.}
\newcommand{\betrag}[1]{\left| #1 \right|}
\newcommand{\norm}[1]{\left\| #1\right\| }
\newcommand{\vonbis}[3]{#1=#2,\ldots,#3}
\newcommand{\kgkg}[3]{#1\leq#2\leq#3}
\newcommand{\Realteil}[1]{\mathrm{Re}(#1)}
\newcommand{\Imagteil}[1]{\mathrm{Im}(#1)}
\newcommand{\mile}[3]{\mathrm{E}_{#1,#2}(#3)}
\newcommand{\Lapllam}[1]{\mathscr{L}\curly{#1,\ \lambda}}
\newcommand{\invLapl}[1]{\mathscr{L}^{-1}\curly{#1,\ t}}
\newcommand{\CDiff}[2]{{\mathrm{D}_{t}^{#2}}#1}
\newcommand{\bloperator}[1]{\mathcal{B}(#1)}
\newcommand{\res}[2]{R\mybrace{#1,\,#2}}
\newcommand{\sector}[1]{\Sigma_{#1}}
\newcommand{\lgro}[1]{\left[ #1\right) }
\newcommand{\loro}[1]{\left(#1\right)}
\newcommand{\lorg}[1]{\left(#1\right]}
\newcommand{\lgrg}[1]{\left[#1\right]}
\newcommand{\Cfkt}[2]{C^{#1}\!\mybrace{#2}}
\newcommand{\Rplus}{\R_+}
\newcommand{\Gammafkt}[1]{\Gamma\mybrace{#1}}
\newcommand{\convolution}[3]{\mybrace{#1*#2}(#3)}
\newcommand{\e}{\mathrm{e}}
\newcommand{\dint}{\,\mathrm{d}}
\newcommand{\Id}{\mathrm{Id}}
\newcommand{\Sa}{S_\alpha}
\newcommand{\Pa}{P_\alpha}
\newcommand{\Q}{\mathcal{A}}
\newcommand{\eins}{\mathbbm{1}}
\newcommand{\Oh}[1]{\mathcal{O}\mybrace{#1}}
\newcommand{\Il}{I_\ell}
\newcommand{\eps}{\texttt{eps}}
\newcommand{\arccosh}{\mathrm{arccosh}}
\newcommand{\un}[1]{u_N^{(#1)}}
\newcommand{\Un}[1]{U_N^{(#1)}}
\newcommand{\lamkl}{\lambda_k^{(\ell)}}
\newcommand{\sa}{\lambda^\alpha}
\newcommand{\I}{i}
\newcommand{\Gl}{\Gamma_\ell}
\newtheorem{remark}[theorem]{Remark}
\newtheorem{assumption}[theorem]{Assumptions}
\newtheorem{defi}[theorem]{Definition}
\makeatletter\@addtoreset{equation}{section}\makeatother
\begin{document}

\title{Fast and Parallel Runge-Kutta Approximation of Fractional Evolution Equations}

\author{Marina Fischer\thanks{Mathematisches Institut, Heinrich-Heine-Universit\"at, 40225 D\"usseldorf, Germany ({\tt marina.fischer@uni-duesseldorf.de}).}}

\maketitle

\begin{abstract}
We consider a linear inhomogeneous fractional evolution equation which is obtained from a Cauchy problem by replacing its first-order time derivative with Caputo's fractional derivative. The operator in the fractional evolution equation is assumed to be sectorial. 
By using the inverse Laplace transform a solution to the fractional evolution equation is obtained which can be written as a convolution. 
Based on $L$-stable Runge-Kutta methods a convolution quadrature is derived which allows a stable approximation of the solution. Here, the convolution quadrature weights are represented as contour integrals. On discretising these integrals, we are able to give an algorithm which computes the solution after $N$ time steps with step size $h$ up to an arbitrary accuracy $\varepsilon$. For this purpose the algorithm only requires $\mathcal{O}(N)$ Runge-Kutta steps for a large number of scalar linear inhomogeneous ordinary differential equations and the solutions of $\mathcal{O}(\log(N)\log(\nicefrac{1}{\varepsilon}))$ linear systems what can be done in parallel. In numerical examples we illustrate the algorithm's performance.
\end{abstract}

\begin{keywords}
 convolution quadrature,  inverse Laplace transform,  Runge-Kutta methods,  parallelisable algorithm, subdiffusion equation, time-fractional Schr\"odinger equation, transparent boundary conditions
\end{keywords}

\begin{AMS}
65R20, 65M15, 65Y05, 26A33
\end{AMS}

\section{Introduction}
Over the last few decades the interest in linear fractional differential equations has grown, not least due to the fact that they model phenomena in applied mathematics and physics such as anomalous diffusion in viscoelastic materials. See the references in \cite{Cuesta2006, Garrappa2015, Jiang2017} for an overview of applications and examples. In this paper, we study the inhomogeneous fractional evolution equation 
\begin{equation}\label{eqDiffglg}
\CDiff{u(t)}{\alpha}=A u(t)+g(t)\quad\text{for}\quad t\in I:=\loro{0,T}\quad\text{and}\quad u(0)=u_0\in X,
\end{equation}
where $A:D(A)\subset X\rightarrow X$ is a closed linear densely defined operator in a Banach space $\mybrace{X,\norm{.}}$ and the Caputo fractional derivative $\CDiff{}{\alpha}$ of order $\alpha\in\loro{0,1}$ is given by
\begin{align*}
\CDiff{u(t)}{\alpha}:=\frac{1}{\Gammafkt{1-\alpha}}\int_{0}^{t}(t-\tau)^{-\alpha}\,u'(\tau)\dint\tau;\quad\text{cf.}\ \cite{Mainardi2012, Podlubny1999}.
\end{align*}
Although it is possible to attribute physical meaning to the initial conditions for other definitions of fractional derivative, such as the one of Riemann-Liouville, using Caputo's derivative enables us to take into account initial values whose physical interpretation is easier to see; cf. \cite{Podlubny2006, Podlubny1999}. 

We further assume the operator $A$ in \eqref{eqDiffglg} to be \textit{sectorial}; cf. \cite{Engel2000, Henry1981} which means that: 
\begin{equation}\label{defSectorial}
\begin{minipage}{0.9\textwidth}
\centering
The resolvent $\res{\lambda}{A}$ is analytic in a sector \\
$\sector{\theta_0}:=\curly{\lambda\in\C\setminus\curly{0}\sd{\betrag{\arg(\lambda)}<\nicefrac{\pi}{2}+\theta_0}}$ with  $\theta_0\in\lorg{0,\nicefrac{\pi}{2}}$\\ and is bounded there by $\norm{\res{\lambda}{A}}\leq \nicefrac{C}{\betrag{\lambda}}$ for some real $C>0$.
\end{minipage}
\end{equation}
The assumption includes unbounded operators such as the Laplacian $A=\Delta$ on $\R^d$ or on a domain $\Omega\subset \R^d$ together with Dirichlet or Neumann boundary conditions \cite{Henry1981,Pazy1983}. Typically, the spatial discretisation of sectorial operators with finite differences or finite elements also satisfies the bound \eqref{defSectorial} in $\mathrm{L}^p$-norms, where the constant $C$ and the angle $\theta_0$ are independent of the spatial discretisation parameter; see e.g. \cite{Ashyralyev1994, Bakaev2002}.

On the one hand, fractional evolution equations as \eqref{eqDiffglg} with sectorial operators have already been studied under theoretical aspects, for example in order to give existence and uniqueness theorems concerning their solutions; cf. \cite{Bajlekov2001, Chen2010, Li2012}. Especially \cite{Bajlekov2001} should be mentioned as one of the most prominent sources of this paper's theoretical basis as it focuses on the homogeneous version of \eqref{eqDiffglg} as well as on the evolution equation using the Riemann-Liouville derivative.\\ 
On the other hand, there is a huge amount of numerical methods for the time discretisation of \eqref{eqDiffglg} proposed by various authors \cite{Baffet2017,Banjai2011,Cuesta2006,Diethelm2006,Garrappa2015,Jacobs2016,Jiang2017,Lopez-Fernandez2008,Lubich2002,McLean2010,Schaedle2006,Stynes2017,Weideman2007}. In almost every approach integrals occur which have to be approximated throughout the numerical treatment of \eqref{eqDiffglg}. Some of these are contour integrals which for example can be derived using the Laplace transform of the fractional equation; see e.g. \cite{Weideman2007}. Other approaches make use of the integral representation of fractional derivative or fractional integration \cite{Garrappa2015,Jiang2017}. Often the different approaches lead to convolution integrals. Here, among methods like product integration rules \cite{Garrappa2015} or the recently formulated sum-of-exponentials approximation \cite{Jiang2017}, especially convolution quadrature based on multistep \cite{Lubich1986} or Runge-Kutta methods \cite{Lubich1993} has to be mentioned as it provides a theoretical background for the numerical treatment of fractional equations such as \eqref{eqDiffglg}.

In this paper, we give the solution $u(t)$ to \eqref{eqDiffglg} which can partly be represented as a convolution integral. Based on the fast Runge-Kutta approximation of inhomogeneous parabolic equations, which is presented in \cite{Lopez-Fernandez2005}, we propose an algorithm that computes $u(t)$ at a fixed time $t=Nh$ after $N$ steps of Runge-Kutta convolution quadrature and with step size $h$, up to an arbitrary accuracy $\varepsilon$. In order to do so the algorithm requires 
\begin{equation*}
\Oh{N}\ \text{steps}
\end{equation*}
of an implicit $L$-stable Runge-Kutta time discretisation for ordinary differential equations of the form $y'(t)= \lambda y(t)+g(t)$ what can be done in parallel for $\Oh{\log(\nicefrac{1}{\varepsilon})}$ complex parameters $\lambda$ and for the entries of the inhomogeneity $g$. Furthermore, the algorithm requires the solution of only 
\begin{equation*}
\Oh{\log(N)\log(\nicefrac{1}{\varepsilon})}\ \text{linear systems}
\end{equation*}
of the form $(\lambda^\alpha\Id-A)x=y$, all of which can be treated in parallel. If the number of steps $N$ is large, the number of linear systems is thus reduced noticeably.

Therefore, the algorithm especially offers a fast approximation to the solution of \eqref{eqDiffglg} on a short subinterval around the fixed time $t$ or at a relatively small number of selected time points. However, it is not useful for computing all values $u_1,\ldots,u_N$; see e.g. \cite{Garrappa2015,Jacobs2016,Jiang2017,Lubich1988,Lubich1993,Lubich2002,Schaedle2006} for algorithms with this purpose.

Before focussing on numerical considerations related to \eqref{eqDiffglg}, Section \ref{chSol} is concerned with finding the solution of \eqref{eqDiffglg} itself. Using some theoretical results on solution operators, we find that the solution contains a convolution of an operator-valued function with the inhomogeneity $g$. Hence, in Section \ref{chConv} we review convolution quadrature based on Runge-Kutta methods. Using Cauchy's integral formula we give a contour integral representation of the occurring convolution weights whose discretisation along hyperbolas is studied in Section \ref{chDiscret}. Based on the results of these sections, we describe the fast and parallel algorithm in Section \ref{chAlgo} and give an extension to systems with a mass matrix. Section \ref{chNumEx} concludes the paper by illustrating the algorithm's performance in some numerical experiments.

\section{Solution to the fractional evolution equation}\label{chSol}
In this first basic section we take a closer look on the fractional evolution equation and deal with its strong solution. To this end, by making use of the resolvent, we define two operators $\Sa$ and $\Pa$ that enable us to write the solution with the help of a convolution which is essential for the algorithm we propose here. 
\subsection{Preliminaries}
We first have a look on the operator-valued function \linebreak$\res{\lambda^\alpha}{A}$ that plays an important role in giving the solution to equation \eqref{eqDiffglg}: Define the angle $\theta_1$ in dependence on the order $\alpha$ of the fractional derivative and on the angle $\theta_0$ as
\begin{equation}\label{eqWahltheta1}
\theta_1:=\min\curly{\frac{\pi(1-\alpha)+2\theta_0}{2\alpha},\,\frac{\pi}{2}}\ \in\lorg{0,\frac{\pi}{2}}.
\end{equation}
With this choice, on taking the $\alpha$th power of $\lambda\in\sector{\theta_1}$ we obtain that $\lambda^\alpha$ is contained in the resolvent's sector of analyticity $\sector{\theta_0}$. Therefore, $\res{\lambda^\alpha}{A}$ is analytic in the sector $\sector{\theta_1}$. Here the relation $\theta_1\geq\theta_0$ yields $\sector{\theta_0}\subset\sector{\theta_1}$. Taking into account the bound \eqref{defSectorial}, $\res{\lambda^\alpha}{A}$ thus satisfies
\begin{align}\label{eqAbschSaPa}
\norm{\res{\lambda^\alpha}{A}}\leq \frac{C}{\betrag{\lambda}^\alpha}\quad\text{and}\quad
\norm{\lambda^{\alpha-1}\res{\lambda^\alpha}{A}}\leq \frac{C\betrag{\lambda}^{\alpha-1}}{\betrag{\lambda}^\alpha}=\frac{C}{\betrag{\lambda}}
\end{align}
for some real $C>0$ and for $\lambda\in\sector{\theta_1}$.

\subsection{Homogeneous fractional evolution equation}
In the next two subsections we aim at giving the solution to the  evolution equation \eqref{eqDiffglg} whose approximation is studied in the main part of this paper. Here, for shortness, we introduce the function $\Phi_\beta(t)$ which we define for a parameter $\beta\geq 0$ as
\begin{equation*}
\Phi_\beta(t):=
\begin{cases}
\quad \frac{1}{\Gammafkt{\beta}}\,t^{\beta-1},\quad &t>0,\\
\quad0, &t\leq0,
\end{cases}
\end{equation*}
where $\Gammafkt{\beta}$ is the Gamma function \cite{Mainardi2012}. It can be verified that this function satisfies the semigroup property
$\convolution{\Phi_\alpha}{\Phi_\beta}{t}=\Phi_{\alpha+\beta}(t)$ for $\alpha,\ \beta\geq 0.$ 

\begin{defi}\label{defStrongSol}
	A function $u\in\Cfkt{}{I,X}$ that satisfies $u\in\Cfkt{}{I,D(A)}$ and ${\Phi_{1-\alpha}*\mybrace{u-u_0}\in\Cfkt{1}{I,X}}$ and that fulfils \eqref{eqDiffglg} is called a strong solution to equation \eqref{eqDiffglg}; cf. \cite{Bajlekov2001}.
\end{defi}

In order to give a representation of the solution we first consider the homogeneous version of the fractional evolution equation \eqref{eqDiffglg} which reads as
\begin{equation}\label{eqHomoDiffglg}
\CDiff{u(t)}{\alpha}=A u(t)\quad\text{for}\quad t\in I\quad\text{with}\quad u(0)=u_0.
\end{equation}
Applying the Riemann-Liouville fractional integral \cite{Mainardi2012}, that means convolving each side of the evolution equation with $\Phi_\alpha$, we find that \eqref{eqHomoDiffglg} is equivalent to the Volterra equation
\begin{equation*}
u(t)=u_0+\convolution{\Phi_\alpha}{Au}{t}\quad\text{for}\quad t\in I.
\end{equation*}
Now we can give the following important definition; cf. \cite{Bajlekov2001, Pruess1993}.
\begin{defi}\label{defSolOp}
	A family $\curly{\Sa(t)}_{t\geq0}\subset\bloperator{X}$ is called a solution operator for \eqref{eqHomoDiffglg} if the following conditions are satisfied:
	\begin{itemize}
		\item[(a)] $\Sa(t)$ is strongly continuous for $t\geq0$ and $\Sa(0)=\Id$.
		\item[(b)] $\Sa(t)D(A)\subset D(A)$ and $A\Sa(t)u_0=\Sa(t)Au_0$ for all $u_0\in D(A)$ and $t\geq 0$.
		\item[(c)] The resolvent equation 
		\begin{equation}\label{eqVolterra}
		\Sa(t)u_0=u_0+\convolution{\Phi_{\alpha}}{A\Sa u_0}{t}
		\end{equation}
	holds for all $u_0\in D(A)$ and $t\geq 0$.
	\end{itemize}
	If $\Sa(t)$ moreover admits an analytic extension to a sector $\sector{\rho_0-\nicefrac{\pi}{2}}$ for some ${\rho_0\in\lorg{0,\nicefrac{\pi}{2}}}$ and if for each $\rho<\rho_0$ and $\omega>0$ there is a constant $C>0$ such that the bound
	\begin{equation*}
	\norm{\Sa(t)}\leq C\,\e^{\omega\Realteil{t}}\quad \text{for}\quad t\in\sector{\rho-\nicefrac{\pi}{2}}
	\end{equation*}
	is fulfilled, then it is of analyticity type $(\rho_0,0)$.
\end{defi}

In \cite[Prop. 1.1]{Pruess1993} it is shown that \eqref{eqHomoDiffglg} is well-posed in the sense of \cite[Def. 2.2]{Bajlekov2001} iff it admits a solution operator. Furthermore, if a solution operator exists, then it is unique. In \cite[Thm. 2.14]{Bajlekov2001} the existence of solution operators for \eqref{eqHomoDiffglg} is characterized as follows: 
\begin{theorem}
	A linear closed densely defined operator $A$ generates an analytic solution operator $\Sa(t)$ of type $(\rho_0,0)$ with $\rho_0\in\lorg{0,\nicefrac{\pi}{2}}$ iff $\lambda^\alpha\in\rho(A)$ for each $\lambda\in\sector{\rho_0}$ and for any $\omega>0,\ \rho<\rho_0$, there is a constant $C>0$ such that
	\begin{equation*}
	\norm{\lambda^{\alpha-1}\res{\lambda^\alpha}{A}}\leq \frac{C}{\betrag{\lambda}}\quad\text{for}\quad \lambda\in\sector{\rho}.
	\end{equation*}
\end{theorem}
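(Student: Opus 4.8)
The plan is to treat this statement as a generation theorem in the spirit of the Hille--Yosida--Phillips characterisation of analytic semigroups (the case $\alpha=1$), passing between the solution operator and the resolvent through the Laplace transform; this is the route taken for abstract Volterra equations in \cite{Pruess1993,Bajlekov2001}. The cornerstone, which I would establish first, is the identity
\begin{equation*}
\Lapllam{\Sa(t)u_0}=\lambda^{\alpha-1}\res{\lambda^\alpha}{A}\,u_0.
\end{equation*}
Laplace-transforming the resolvent equation \eqref{eqVolterra}, using $\Lapllam{\Phi_\alpha}=\lambda^{-\alpha}$ and the closedness of $A$ to pull it out of the transform, gives $\Lapllam{\Sa u_0}=\tfrac{1}{\lambda}u_0+\lambda^{-\alpha}A\,\Lapllam{\Sa u_0}$, hence $\mybrace{\Id-\lambda^{-\alpha}A}\Lapllam{\Sa u_0}=\tfrac1\lambda u_0$, and multiplying by $\lambda^\alpha$ yields the claim. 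With this identity the two conditions of the theorem say exactly that the Laplace transform of $\Sa$ exists on $\sector{\rho_0}$ and decays like $\nicefrac{C}{\betrag{\lambda}}$, so the equivalence becomes a representation theorem for the (inverse) Laplace transform.

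For necessity I would start from an analytic solution operator of type $(\rho_0,0)$. By Definition \ref{defSolOp} it extends analytically to $\sector{\rho_0-\nicefrac{\pi}{2}}$ with $\norm{\Sa(t)}\leq C\,\e^{\omega\Realteil{t}}$ there, so its Laplace transform converges for $\Realteil{\lambda}>\omega$ and, by Cauchy's theorem, the contour may be rotated into the analyticity sector. The resulting analytic continuation extends $\Lapllam{\Sa(t)}$ to all of $\sector{\rho_0}$; in particular $\lambda^\alpha\in\rho(A)$ there. Estimating the rotated integral along rays then converts the exponential bound on $\Sa$ into the sectorial bound $\norm{\lambda^{\alpha-1}\res{\lambda^\alpha}{A}}\leq\nicefrac{C}{\betrag{\lambda}}$ on each subsector $\sector{\rho}$, $\rho<\rho_0$, with the constant depending on $\omega$ and $\rho$.

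For sufficiency I would reverse the construction and define
\begin{equation*}
\Sa(t):=\frac{1}{2\pi\I}\int_\Gamma\e^{\lambda t}\,\lambda^{\alpha-1}\res{\lambda^\alpha}{A}\dint\lambda,
\end{equation*}
with $\Gamma$ a Hankel-type contour in $\sector{\rho_0}$ made of the rays $\arg\lambda=\pm\phi$, $\nicefrac{\pi}{2}<\phi<\nicefrac{\pi}{2}+\rho_0$. The resolvent bound makes the integrand decay like $\nicefrac{1}{\betrag{\lambda}}$, and rotating $\Gamma$ so that $\Realteil{\lambda t}\to-\infty$ renders the integral absolutely convergent and $\Sa(t)$ bounded. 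Allowing $t$ to be complex and rotating $\Gamma$ accordingly produces the analytic extension; the binding constraint $\betrag{\arg t}<\phi-\nicefrac{\pi}{2}$ as $\phi\uparrow\nicefrac{\pi}{2}+\rho_0$ yields precisely the sector $\sector{\rho_0-\nicefrac{\pi}{2}}$, and splitting off the part of $\Gamma$ near $\Realteil{\lambda}=\omega$ gives the exponential bound. Properties (a)--(c) then follow: (b) since $\res{\lambda^\alpha}{A}$ commutes with $A$ on $D(A)$, (c) by inverting the transform identity above and invoking uniqueness of the Laplace transform, and $\Sa(0)=\Id$ by writing $\lambda^{\alpha-1}\res{\lambda^\alpha}{A}=\tfrac1\lambda\Id+\tfrac1\lambda A\res{\lambda^\alpha}{A}$, whose first term inverts to $\Id$ while the second, of order $\betrag{\lambda}^{-1-\alpha}$ on $D(A)$, contributes nothing as $t\to\Np$.

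I expect the sufficiency direction to be the main obstacle, and within it two points in particular: the angle bookkeeping that tracks how the opening of the $\lambda$-sector $\sector{\rho_0}$, distorted by the factors $\lambda^{\alpha-1}$ and $\lambda^\alpha$, produces exactly the $t$-sector $\sector{\rho_0-\nicefrac{\pi}{2}}$, and the verification of strong continuity at $t=0$ with $\Sa(0)=\Id$, where the slow $\nicefrac{1}{\betrag{\lambda}}$ decay forces the subtraction of the leading term $\tfrac1\lambda\Id$ together with a density argument on $D(A)$.
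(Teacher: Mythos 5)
The paper does not prove this theorem at all: it is quoted verbatim from \cite[Thm.~2.14]{Bajlekov2001}, so there is no in-paper proof to compare against. Your proposal reconstructs the standard generation-theorem argument (Pr\"uss's analytic-resolvent theorem specialised to the kernel $\Phi_\alpha$), which is in fact how the cited result is proved, and most of it is sound in outline: the cornerstone identity $\Lapllam{\Sa(t)u_0}=\lambda^{\alpha-1}\res{\lambda^\alpha}{A}u_0$, the Hankel-contour construction and angle bookkeeping for sufficiency, and the splitting $\lambda^{\alpha-1}\res{\lambda^\alpha}{A}=\tfrac1\lambda\Id+\tfrac1\lambda A\res{\lambda^\alpha}{A}$ with a density argument for $\Sa(0)=\Id$ are exactly the right ingredients.

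There is, however, a genuine gap in your necessity direction, and it is not a matter of missing detail: the bound cannot be obtained by the rotation argument because, as literally stated (sector with vertex $0$, bound $\nicefrac{C}{\betrag{\lambda}}$), the claim is false. Writing $H(\lambda)$ for the continued Laplace transform of $\Sa$, rotating the integration ray gives $\norm{H(\lambda)}\leq C_\omega/\bigl(\Realteil{\lambda e^{\I\vartheta}}-\omega\bigr)$, which yields $\nicefrac{C}{\betrag{\lambda}}$ only for $\betrag{\lambda}$ bounded below in terms of $\omega$; nothing controls $\lambda\to 0$, since $C_\omega$ may blow up as $\omega\to 0$. Indeed, the nilpotent matrix $A=\bigl(\begin{smallmatrix}0&1\\0&0\end{smallmatrix}\bigr)$ generates $\Sa(t)=\Id+t^\alpha A/\Gammafkt{\alpha+1}$, which extends analytically and satisfies the type-$(\rho_0,0)$ bounds of Definition \ref{defSolOp} for every $\rho_0\in\lorg{0,\nicefrac{\pi}{2}}$, yet $\lambda^{\alpha-1}\res{\lambda^\alpha}{A}=\lambda^{-1}\Id+\lambda^{-1-\alpha}A$ grows like $\betrag{\lambda}^{-1-\alpha}$ as $\lambda\to0$, violating the asserted bound. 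The problem lies in the paper's transcription of the cited theorem: in the source the estimate reads $\nicefrac{C}{\betrag{\lambda-\omega}}$ on a sector with vertex $\omega>0$, which keeps $\lambda$ away from the origin --- note the vestigial quantifier ``for any $\omega>0$'' in the statement here, in which $\omega$ never actually appears. So your necessity argument can only be completed after restoring the $\omega$-shifted sectors (or restricting to $\betrag{\lambda}\geq\delta>0$); with that correction it goes through, provided you also fill in the step you assert with ``in particular $\lambda^\alpha\in\rho(A)$ there'': one must propagate the identities $(\lambda^\alpha\Id-A)H(\lambda)u=\lambda^{\alpha-1}u$ for $u\in X$ and $H(\lambda)(\lambda^\alpha\Id-A)u=\lambda^{\alpha-1}u$ for $u\in D(A)$ from the half-plane of convergence to the whole sector by analytic continuation (using closedness of $A$) before invertibility can be concluded. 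None of this affects the paper downstream, since only the ``if'' direction is invoked there, with the resolvent bound supplied directly by \eqref{defSectorial}.
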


With the findings in the preliminaries, we can thus deduce that \eqref{eqHomoDiffglg} has a unique solution operator $\curly{\Sa(t)}_{t\geq0}$ which is of analyticity type $(\theta_1,0)$ and which, as proven in \cite{Bajlekov2001}, can be written as the inverse Laplace transform \cite{Doetsch1970, Spiegel1977} of the operator-valued function $\res{\lambda^\alpha}{A}$. That means, it is
\begin{align*}
\Sa(t)&:=\invLapl{\lambda^{\alpha-1}\res{\lambda^\alpha}{A}}\\
& = \frac{1}{2\pi\I}\int_\Gamma \e^{\lambda t}\,\lambda^{\alpha-1}\res{\lambda^\alpha}{A}\dint\lambda,
\end{align*}
with a contour $\Gamma$ in $\sector{\theta_1}$, going to infinity with an acute angle to the negative real half-axis and oriented counter-clockwise.
It can be verified that we have $\Sa(t)u_0\in D(A)$ for any $u_0\in X$ and $t>0$. Besides, the bound
\begin{equation*}
\norm{A\Sa(t)}\leq C\,\e^{\omega t}\,(1+t^{-\alpha})
\end{equation*}
holds for $t>0$, $C>0$ and $\omega>0$ \cite[Prop. 2.15]{Bajlekov2001}. Now it is easy to see that $\Sa(t)u_0$  satisfies the conditions in Definition \ref{defStrongSol} and hence is a strong solution to \eqref{eqHomoDiffglg} for $t\in I$ and for each $u_0\in X$.

\subsection{Inhomogeneous fractional evolution equation}
After having discussed the homogeneous case, we now focus on the inhomogeneous evolution equation
\begin{equation}\label{eqInhomoDiffglg}
\CDiff{u(t)}{\alpha}=A u(t) + g(t)\quad\text{for}\quad t\in I\quad\text{with}\quad u(0)=0.
\end{equation}
In order to give its strong solution we define another operator $\Pa(t)\in\bloperator{X}$ for $t\geq 0$ as
\begin{align}
\Pa(t)&:=\invLapl{\res{\lambda^\alpha}{A}}\notag\\
& = \frac{1}{2\pi\I}\int_\Gamma \e^{\lambda t}\,\res{\lambda^\alpha}{A}\dint\lambda,\label{eqPa}
\end{align} 
where again $\Gamma$ is a counter-clockwise oriented contour contained in $\sector{\theta_1}$, going to infinity with an acute angle to the negative real half-axis. This definition is possible due to the bound \eqref{eqAbschSaPa} which also yields $\norm{\Pa(t)}\leq C t^{\alpha-1}\e^{\omega t}$ for $t>0$ and $\omega> 0$; cf. \cite{Doetsch1970}.
 
Although $\Pa(t)$ is not a solution operator in the sense of Definition \ref{defSolOp} it has some properties in common with $\Sa(t)$ which we list next.
\begin{lemma}\label{lemPa}
	The following assertions hold for the operator $\Pa(t)$.
	\begin{itemize}
		\item[(a)] $\Pa(t)$ is strongly continuous for $t \geq 0$.
		\item[(b)] $\Pa(t)u_0\in D(A)$ for $t>0$ and for all $u_0\in X$.
		\item[(c)] $A\Pa(t)u_0=\Pa(t)Au_0$ for all $u_0\in D(A)$.
	\end{itemize}
\end{lemma}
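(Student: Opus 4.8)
The plan is to extract all three properties from the contour representation \eqref{eqPa}, using only the resolvent bounds \eqref{eqAbschSaPa}, the algebraic identity $A\res{\lambda^\alpha}{A}=\lambda^\alpha\res{\lambda^\alpha}{A}-\Id$, and the closedness of $A$; of the three, only (b) needs a genuine idea.

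For (a) I would fix $u_0\in X$ and apply dominated convergence to
\[
\Pa(t)u_0=\frac{1}{2\pi\I}\int_\Gamma\e^{\lambda t}\,\res{\lambda^\alpha}{A}u_0\dint\lambda.
\]
Parametrising $\Gamma$ so that $\Realteil{\lambda}\leq -c\betrag{\lambda}+d$ holds along its two rays --- which is precisely what the ``acute angle to the negative real half-axis'' provides --- the integrand is dominated, for $t$ ranging over any compact subinterval $[t_1,t_2]\subset(0,\infty)$, by $C\norm{u_0}\betrag{\lambda}^{-\alpha}\e^{-ct_1\betrag{\lambda}+dt_2}$; by the first bound in \eqref{eqAbschSaPa} this is integrable over $\Gamma$, so $t\mapsto\Pa(t)u_0$ is continuous on $(0,\infty)$. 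The only delicate point is the origin: since \eqref{eqAbschSaPa} gives merely $\betrag{\lambda}^{-\alpha}$-decay, the dominating function loses its integrability as $t\to\Np$ (in accordance with $\norm{\Pa(t)}\leq Ct^{\alpha-1}\e^{\omega t}$), so the behaviour at $t=0$ must be read off from the quantitative bound rather than from a uniform domination.

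The heart of the lemma is (b), where $A$ cannot be pulled through the integral naively because $\res{\lambda^\alpha}{A}$ decays only like $\betrag{\lambda}^{-\alpha}$ while $A\res{\lambda^\alpha}{A}$ does not decay at all. The key is the resolvent identity $A\res{\lambda^\alpha}{A}=\lambda^\alpha\res{\lambda^\alpha}{A}-\Id$: the second bound in \eqref{eqAbschSaPa} gives $\norm{\lambda^\alpha\res{\lambda^\alpha}{A}}=\betrag{\lambda}\,\norm{\lambda^{\alpha-1}\res{\lambda^\alpha}{A}}\leq C$, so $A\res{\lambda^\alpha}{A}$ is bounded, and for $t>0$ the exponential decay of $\e^{\lambda t}$ along $\Gamma$ makes $\int_\Gamma\e^{\lambda t}A\res{\lambda^\alpha}{A}u_0\dint\lambda$ absolutely convergent. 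To turn this into a statement about $\Pa(t)u_0$ I would truncate $\Gamma$ to a finite arc $\Gamma_R$ and set $v_R:=\frac{1}{2\pi\I}\int_{\Gamma_R}\e^{\lambda t}\res{\lambda^\alpha}{A}u_0\dint\lambda$. Each $v_R$ lies in $D(A)$, since $\res{\lambda^\alpha}{A}u_0\in D(A)$ and both $\lambda\mapsto\e^{\lambda t}\res{\lambda^\alpha}{A}u_0$ and $\lambda\mapsto\e^{\lambda t}A\res{\lambda^\alpha}{A}u_0$ are continuous on the compact arc, so that a closed operator may be interchanged with the Bochner integral of such a pair, giving $Av_R=\frac{1}{2\pi\I}\int_{\Gamma_R}\e^{\lambda t}A\res{\lambda^\alpha}{A}u_0\dint\lambda$. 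Letting $R\to\infty$ yields $v_R\to\Pa(t)u_0$ and $Av_R\to\frac{1}{2\pi\I}\int_\Gamma\e^{\lambda t}A\res{\lambda^\alpha}{A}u_0\dint\lambda$; the closedness of $A$ then delivers $\Pa(t)u_0\in D(A)$ together with that formula for $A\Pa(t)u_0$. The $\Id$-piece of the identity contributes $\frac{1}{2\pi\I}\int_\Gamma\e^{\lambda t}\dint\lambda=0$ for $t>0$ by closing the contour in the left half-plane, so one may equivalently write $A\Pa(t)u_0=\frac{1}{2\pi\I}\int_\Gamma\e^{\lambda t}\lambda^\alpha\res{\lambda^\alpha}{A}u_0\dint\lambda$. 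Organising this splitting so that the non-decaying factor $\lambda^\alpha\res{\lambda^\alpha}{A}$ is tamed by $\e^{\lambda t}$ while the leftover $\Id$-term is discarded is, I expect, the main obstacle of the whole proof.

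Part (c) is then immediate. For $u_0\in D(A)$ the resolvent commutes with $A$, i.e.\ $\res{\lambda^\alpha}{A}Au_0=A\res{\lambda^\alpha}{A}u_0$, and the integral $\frac{1}{2\pi\I}\int_\Gamma\e^{\lambda t}\res{\lambda^\alpha}{A}Au_0\dint\lambda$ now converges absolutely for $t>0$ because $\norm{\res{\lambda^\alpha}{A}Au_0}\leq C\betrag{\lambda}^{-\alpha}\norm{Au_0}$. Comparing it with the formula for $A\Pa(t)u_0$ from (b) gives
\[
\Pa(t)Au_0=\frac{1}{2\pi\I}\int_\Gamma\e^{\lambda t}\res{\lambda^\alpha}{A}Au_0\dint\lambda=\frac{1}{2\pi\I}\int_\Gamma\e^{\lambda t}A\res{\lambda^\alpha}{A}u_0\dint\lambda=A\Pa(t)u_0,
\]
which is the claimed commutation relation.
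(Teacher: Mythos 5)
The paper itself gives no proof of this lemma: it cites \cite{Bajlekov2001} for (a) and (b) and points to Proposition 1.1.7 of \cite{Arendt2001} --- the fact that a closed operator may be pulled through a Bochner integral of a function whose image under the operator is again integrable --- for (c). So your self-contained argument is necessarily a different route, and for (b) and (c) it is correct: it is essentially the standard argument hiding behind those citations. Two remarks on economy. First, your truncation-plus-closedness step in (b) is precisely a hand proof of the cited Arendt proposition; quoting that result once would shorten the write-up. Second, given that result, (c) need not be routed through (b) at all: for $u_0\in D(A)$ the integrand $\e^{\lambda t}\res{\lambda^\alpha}{A}u_0$ is $D(A)$-valued and its image $\e^{\lambda t}\res{\lambda^\alpha}{A}Au_0$ is already absolutely integrable over $\Gamma$ (decay $\betrag{\lambda}^{-\alpha}$ times the decaying exponential), so the interchange applies directly and yields $\Pa(t)u_0\in D(A)$ and $A\Pa(t)u_0=\Pa(t)Au_0$ in one stroke; the splitting $A\res{\lambda^\alpha}{A}=\lambda^\alpha\res{\lambda^\alpha}{A}-\Id$ is genuinely needed only for (b), where $u_0$ is a general element of $X$. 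Your auxiliary observation $\int_\Gamma\e^{\lambda t}\dint\lambda=0$ for $t>0$ is also correct.

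The gap is in (a) at the endpoint $t=0$. Your dominated-convergence argument gives (even norm-) continuity on $\loro{0,\infty}$, but deferring $t=0$ to ``the quantitative bound'' cannot work: the bound $\norm{\Pa(t)}\leq Ct^{\alpha-1}\e^{\omega t}$ diverges as $t\to\Np$, and in fact strong continuity at $t=0$ is false as literally stated. Writing, for $u_0\in D(A)$,
\begin{equation*}
\res{\lambda^\alpha}{A}u_0=\lambda^{-\alpha}u_0+\lambda^{-\alpha}\res{\lambda^\alpha}{A}Au_0
\end{equation*}
and integrating along $\Gamma$ gives $\Pa(t)u_0=\Phi_\alpha(t)\,u_0+\Oh{t^{2\alpha-1}}$, and since $\Phi_\alpha(t)=t^{\alpha-1}/\Gammafkt{\alpha}$ dominates the remainder, $\norm{\Pa(t)u_0}\rightarrow\infty$ as $t\to\Np$ for every nonzero $u_0\in D(A)$; the simplest instance is $A=0$, which is sectorial and gives $\Pa(t)=\Phi_\alpha(t)\Id$. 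So assertion (a) can only hold for $t>0$ (which is how the blow-up bound is consistent with it), and read that way your proof of (a) is complete: the honest fix is to restrict the claim to $t>0$ and delete the hedge about the origin rather than to try to repair it.
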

Here, properties (a) and (b) are shown in \cite{Bajlekov2001}. Helpful for a proof of property (c) is Proposition 1.1.7 in \cite{Arendt2001}.

\begin{theorem}\label{thmInhomLsg}
	Assume that $g\in\Cfkt{1}{I,D(A)}$.
	Then the unique strong solution to \eqref{eqInhomoDiffglg} is given by
	\begin{equation}\label{eqLosu}
	u(t)=\convolution{\Pa}{g}{t}\quad \text{for}\quad t\in I.
	\end{equation}
\end{theorem}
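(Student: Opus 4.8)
The plan is to show that $u=\Pa*g$ meets every requirement of Definition~\ref{defStrongSol} together with \eqref{eqInhomoDiffglg}, and afterwards to deduce uniqueness from the homogeneous case.

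First I would confirm that the convolution $u(t)=\int_0^t\Pa(t-\tau)g(\tau)\dint\tau$ is well defined with $u\in\Cfkt{}{I,X}$ and $u(0)=0$: since $\alpha\in\loro{0,1}$, the singularity in the bound $\norm{\Pa(t)}\leq C\,t^{\alpha-1}\e^{\omega t}$ is integrable at the origin, and combined with $g\in\Cfkt{}{I,X}$ and the strong continuity of $\Pa$ from Lemma~\ref{lemPa}(a) this yields continuity of $u$. To obtain $u\in\Cfkt{}{I,D(A)}$ I would exploit that $g(\tau)\in D(A)$, so that Lemma~\ref{lemPa}(c) gives $A\Pa(t-\tau)g(\tau)=\Pa(t-\tau)Ag(\tau)$; as $A$ is closed and $Ag\in\Cfkt{}{I,X}$ (because $g\in\Cfkt{1}{I,D(A)}$), the operator $A$ may be pulled inside the integral, producing the identity $Au=\Pa*Ag$, which is again continuous.

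The central step is to establish the Volterra equation $u=\Phi_\alpha*\mybrace{Au+g}$, which I would verify on the Laplace side. By the convolution theorem $\Lapl{u}=\res{s^\alpha}{A}\,\Lapl{g}$, while $\Lapl{\Phi_\alpha}=s^{-\alpha}$. The resolvent identity $A\res{s^\alpha}{A}+\Id=s^\alpha\res{s^\alpha}{A}$ then gives $s^{-\alpha}\mybrace{A\,\Lapl{u}+\Lapl{g}}=\res{s^\alpha}{A}\,\Lapl{g}=\Lapl{u}$, and injectivity of the Laplace transform on continuous, exponentially bounded $X$-valued functions transfers this identity back to the time domain. Everything else then follows cleanly: convolving the Volterra equation with $\Phi_{1-\alpha}$ and using the semigroup property $\Phi_{1-\alpha}*\Phi_\alpha=\Phi_1$ yields $\Phi_{1-\alpha}*u=\Phi_1*\mybrace{Au+g}=\int_0^t\mybrace{Au+g}(\tau)\dint\tau$. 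Since $Au+g\in\Cfkt{}{I,X}$, the right-hand side lies in $\Cfkt{1}{I,X}$ with derivative $Au+g$; hence $\Phi_{1-\alpha}*u\in\Cfkt{1}{I,X}$, and because $u_0=0$ the definition of the Caputo derivative gives $\CDiff{u}{\alpha}=\frac{\mathrm{d}}{\mathrm{d}t}\mybrace{\Phi_{1-\alpha}*u}=Au+g$. Thus $u$ is a strong solution.

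For uniqueness I would argue by linearity: the difference $w$ of two strong solutions of \eqref{eqInhomoDiffglg} solves the homogeneous equation \eqref{eqHomoDiffglg} with $w(0)=0$ in the strong sense, so by the uniqueness of the solution operator $\Sa$ recorded above it must equal $\Sa(t)\cdot0=0$. The step I expect to be most delicate is the central one: justifying the interchange of $A$ with the convolution integral through closedness and, above all, the passage through the Laplace transform for the $\bloperator{X}$-valued, merely exponentially bounded and singular kernel $\Pa$. The estimates \eqref{eqAbschSaPa} and the bound on $\norm{\Pa(t)}$ are precisely what make the convolution theorem and the inversion/injectivity applicable in this Banach-space-valued setting.
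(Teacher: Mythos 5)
Your proposal is correct, but its existence half takes a genuinely different route from the paper's. The paper never verifies the Volterra equation directly: it first establishes the regularity $\Phi_{1-\alpha}*\Pa*g=\Sa*g\in\Cfkt{1}{I,D(A)}$, then invokes Pr\"uss's variation-of-constants formula $\widetilde{u}(t)=\convolution{\Phi_\alpha}{\Sa g(0)}{t}+(\Phi_\alpha*\Sa*g')(t)$, which is known from \cite[Prop.~1.2]{Pruess1993} to satisfy \eqref{eqInhomoDiffglg}, and finally identifies $u=\Pa*g$ with $\widetilde{u}$ by computing that both Laplace transforms equal $\res{\lambda^\alpha}{A}G(\lambda)$. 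That detour through $\Sa$ and $\widetilde{u}$ is exactly where the hypothesis $g\in\Cfkt{1}{I,D(A)}$ is consumed, since $\widetilde{u}$ involves $g(0)$ and $g'$. You instead prove the Volterra equation $u=\Phi_\alpha*\mybrace{Au+g}$ directly from the resolvent identity $A\res{\lambda^\alpha}{A}+\Id=\lambda^\alpha\res{\lambda^\alpha}{A}$ on the Laplace side, and then recover both the required regularity of $\Phi_{1-\alpha}*u$ and the fractional equation itself by convolving with $\Phi_{1-\alpha}$ and using the semigroup property $\Phi_{1-\alpha}*\Phi_\alpha=\Phi_1$. This is more self-contained (no solution operator, no $\widetilde{u}$) and it shows that this particular implication needs only $g\in\Cfkt{}{I,D(A)}$, so that $Au+g$ is continuous; the $C^1$ assumption enters the paper's argument only through Pr\"uss's formula. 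The steps you flag as delicate are the same ones the paper outsources: pulling the closed operator $A$ through the convolution is \cite[Prop.~1.3.4]{Arendt2001}, and the vector-valued convolution theorem and injectivity of the Laplace transform are used tacitly in the paper as well; note also that both your argument and the paper's quietly treat $g$ and $u$ as (extended to) functions on $\Rplus$ in order to take Laplace transforms at all. Your uniqueness argument coincides with the paper's; the only imprecision is that what it rests on is uniqueness of strong solutions to the well-posed homogeneous problem \eqref{eqHomoDiffglg}, not literally ``uniqueness of the solution operator,'' but the paper's own wording is equally terse on this point.
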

\begin{proof}(cf. \cite{Li2012,Pazy1983})
	Uniqueness: Let $u_1, \ u_2\in\Cfkt{}{I,X}$ be two different strong solutions to \eqref{eqInhomoDiffglg}. Then $w:=u_1-u_2\in\Cfkt{}{I,X},\ w(0)=0$ and $\CDiff{w(t)}{\alpha}=Aw(t)$ for $t\in I$. The unique strong solution is thus given by $w(t)=\Sa(t)w(0)=0$ which yields $u_1=u_2.$\\
	Existence: We show that $u(t)$ satisfies the properties in Definition \ref{defStrongSol}. It follows from Lemma \ref{lemPa} and \cite[Prop. 1.3.4]{Arendt2001} that $\Pa*g\in\Cfkt{}{I,D(A)}$.
	Using the convolution theorem for Laplace transforms \cite{Doetsch1970} the equality $\Phi_{1-\alpha}*\Pa*g=\Sa*g$ can easily be verified. On using \cite[Prop. 1.2]{Pruess1993} and since $g\in\Cfkt{1}{I,D(A)}$ we have that $\Sa*g$, and therefore $\Phi_{1-\alpha}*\Pa*g$, is in $\Cfkt{1}{I,D(A)}$. The strong continuity of $\Pa(t)$ implies $u(0)=0$.\\
	Laplace transforming the solution $u(t)$ and using the convolution theorem yields for $\Realteil{\lambda}>0$
		\begin{equation*}
		\Lapllam{u(t)}=\Lapllam{\Pa(t)}\Lapllam{g(t)}=\res{\lambda^\alpha}{A}G(\lambda),
		\end{equation*}
		where $G(\lambda)$ denotes the Laplace transform of $g(t)$.
		Because of the equivalence between \eqref{eqInhomoDiffglg} and the Volterra equation $u(t)=\convolution{\Phi_\alpha}{g}{t}+\convolution{\Phi_\alpha}{Au}{t}$, we know from \cite[Prop. 1.2]{Pruess1993} that \eqref{eqInhomoDiffglg} is satisfied by
		\begin{equation*}
		\widetilde{u}(t)=\convolution{\Phi_\alpha}{\Sa g(0)}{t}+(\Phi_\alpha*\Sa*g')(t)\quad\text{for}\quad t\in I,
		\end{equation*}
		such that for $\Realteil{\lambda}>0$ we have 
		\begin{align*}
		&\Lapllam{\convolution{\Phi_\alpha}{\Sa g(0)}{t}+(\Phi_\alpha*\Sa*g')(t)}\\
		%&=\Lapllam{\convolution{\Phi_\alpha}{\Sa g(0)}{t}}+\Lapllam{(\Phi_\alpha*\Sa*g')(t)}\\
		&=\Lapllam{\Phi_\alpha(t)}\Lapllam{\Sa(t)}g(0)+\Lapllam{\Phi_\alpha(t)}\Lapllam{\Sa(t)}\Lapllam{g'(t)}\\
		&=\lambda^{-1}\res{\lambda^\alpha}{A}g(0)+\lambda^{-1}\res{\lambda^\alpha}{A}(-g(0)+\lambda G(\lambda))\\
		&=\res{\lambda^\alpha}{A}G(\lambda).
		\end{align*}
		It follows from the uniqueness of the Laplace transform that $u(t)=\widetilde{u}(t)$ and hence $u(t)$ satisfies \eqref{eqInhomoDiffglg}.
\end{proof}

Combining the results concerning the strong solutions of \eqref{eqHomoDiffglg} and \eqref{eqInhomoDiffglg} we can thus deduce that the strong solution to \eqref{eqDiffglg} is given by
\begin{equation*}
u(t)=\Sa(t)u_0+\convolution{\Pa}{g}{t}\quad\text{for}\quad t\in I.
\end{equation*}\pagebreak
\begin{remark}
	\begin{itemize}
		\item[(a)] In the case $A\in\bloperator{X}$ this corresponds to the well-known formula
		\begin{align*}
		u(t)=\mile{\alpha}{1}{At^\alpha}u_0+\int_{0}^{t}(t-\tau)^{\alpha-1}\mile{\alpha}{\alpha}{A(t-\tau)^\alpha}g(\tau)\dint\tau;\quad\text{cf. \cite{Podlubny1999}},
		\end{align*}
		where $\mile{\alpha}{\beta}{t}$ with $\alpha,\beta>0$ is the Mittag-Leffler function; see e.g. \cite{Mainardi2012, Podlubny1999}.
		\item[(b)] Instead of assuming $g\in\Cfkt{1}{\Rplus,D(A)}$ in Theorem \ref{thmInhomLsg} it is even sufficient to consider $g\in W^{1,1}(I,D(A))$; cf. \cite[Prop. 1.2]{Pruess1993}.
	\end{itemize}
\end{remark}

\section{Approximation of the convolution}\label{chConv}
In this section we provide the basis for the computations in the fast algorithm. In order to do so, we review briefly Runge-Kutta schemes and Runge-Kutta based convolution quadrature and give two representations of the convolution quadrature weights. 
\subsection{Runge-Kutta schemes}
In the following we consider an implicit Runge-Kutta method with $s$ stages and coefficients $a_{ij},\ b_j,\ c_i$ for $\vonbis{i,j}{1}{s}$, which is of (classical) order $p\geq 1$ and stage order $\widetilde{p}\leq p$. We denote the Runge-Kutta matrix by $\Q=\mybrace{a_{ij}}_{i,j=1}^s$ and the row vector of the weights by $b^T=\mybrace{b_j}_{j=1}^s$. Hence, the stability function is given by
\begin{align*}
r(z)&:=1+z\,q(z)\eins,
\intertext{where $\eins:=\mybrace{1,\ldots,1}^T$ and the row vector $q(z)$ is defined as}
q(z)&:=b^T\mybrace{\Id-z\Q}^{-1}.
\end{align*}
As is well known for a $p$th order Runge-Kutta scheme the stability function is an rational approximation of order $p$ to the exponential function \cite{Hairer2002}.

We assume the Runge-Kutta scheme to be $L$-stable which means that the stability function satisfies the criterion for $A$-stability, i.e.
\begin{align*}
\betrag{r(z)}\leq 1\quad\text{for all }\Realteil{z}\leq 0,
\end{align*}
as well as
\begin{align*}
\lim\limits_{\Realteil{z}\rightarrow -\infty}r(z)=0.
\end{align*}
Here the first property guarantees that the whole negative half-plane is contained in the region of stability. 

Additional to the $L$-stability, we make the following extra assumptions on the Runge-Kutta scheme which we list next.
\begin{assumption}\label{asRK} The $L$-stable Runge-Kutta scheme fulfils:
	\begin{itemize}	
	\item[(a)] The row vector of the weights equals the last row in the Runge-Kutta matrix.
	\item[(b)] The Runge-Kutta matrix $\Q$ is invertible.
	\item[(c)] The eigenvalues of the Runge-Kutta matrix $\Q$ have positive real part. 
\end{itemize}
\end{assumption}
Here assumptions (a) and (b) imply
\begin{equation*}
r(z)=b^T\Q^{-1}\mybrace{\Id-z\Q}^{-1}\eins=e_s^T\mybrace{\Id-z\Q}^{-1}\eins,
\end{equation*}
whereas (c) means that the spectrum $\sigma(\Q)$ is contained in $\sector{\theta_1}$.
The assumptions listed above are in particular satisfied by the Radau IIA familiy of Runge-Kutta methods; see \cite{Hairer2002}. It is well known that an $s$-stage Radau IIA method is of order $2s-1$ and stage order $s$ \cite{Hairer2002}. In the numerical experiments in Section \ref{chNumEx} we make use of these methods of order 1, 3 and 5. 
\subsection{Runge-Kutta based convolution quadrature}\label{chRKConvQuad}
As from now we restrict our considerations to the inhomogeneous evolution equation with zero initial data \eqref{eqInhomoDiffglg} whose strong solution is then solely given as a convolution. Because every equation in the form \eqref{eqDiffglg} can be easily transformed into an equation with zero initial data, this does not limit the applicability of our algorithm.

To construct a convolution quadrature we insert the contour integral representation \eqref{eqPa} of $\Pa$ into \eqref{eqLosu}. Interchanging the integrals yields
\begin{equation}\label{eqLosuylam}
u(t)=\frac{1}{2\pi\I}\int_\Gamma\res{\lambda^\alpha}{A} y_\lambda(t)\dint\lambda,
\end{equation} 
where $y_\lambda(t):=\int_0^t\e^{\lambda(t-\tau)}g(\tau)\dint\tau$ is the solution to the inhomogeneous differential equation $y'(t)=\lambda y(t)+g(t)$ with $y(0)=0$; see \cite{Pazy1983}. Here, following the idea of \cite{Lubich1993} we discretize this initial value problem by a Runge-Kutta method that satisfies Assumptions \ref{asRK}, insert the approximation into \eqref{eqLosuylam} and simplify the result using Cauchy's integral formula. Thus, we obtain the Runge-Kutta based convolution quadrature via a generating function as it is formulated in \cite{Lubich1993}:

\begin{proposition}\label{satzFQDlubich}
	Consider an $s$-stage Runge-Kutta method that satisfies Assumptions \ref{asRK}. Then at time $t_N=Nh$, where $N$ is the number of steps and $h$ is the step size with $Nh\leq T$, we get the approximation to the solution of \eqref{eqInhomoDiffglg} by
	\begin{equation*}
	u_{N}=h\,\mybrace{e_s^T\otimes \Id}\sum\limits_{n=0}^{N-1}W_{n}\,G_{N-1-n},
	\end{equation*}
	where $G_n=(g(t_n+c_kh))_{k=1}^s$ is a column vector and $W_n$ is the convolution quadrature weight given as the $n$th coefficient of the generating function
	\begin{align}\label{eqWn}
	h\sum\limits_{n=0}^\infty W_n\zeta^n=\frac{h}{2\pi\I}\int_\Gamma\mybrace{\Delta(\zeta)-h\lambda\Id}^{-1}\otimes\res{\lambda^\alpha}{A}\dint\lambda=\res{\mybrace{\frac{\Delta(\zeta)}{h}}^\alpha}{A}
\end{align} with
\begin{align*}
	\Delta(\zeta):=\mybrace{\Q+\frac{\zeta}{1-\zeta}\eins b^T}^{-1}.
	\end{align*}
\end{proposition}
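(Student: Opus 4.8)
The plan is to follow the Runge--Kutta convolution quadrature construction of \cite{Lubich1993}, starting from the contour representation \eqref{eqLosuylam}. In that formula the operator $A$ enters only through $\res{\lambda^\alpha}{A}$, whereas the inhomogeneity enters only through the scalar functions $y_\lambda$ solving $y_\lambda'=\lambda y_\lambda+g$, $y_\lambda(0)=0$. I would therefore first discretise this family of scalar initial value problems by the given $s$-stage scheme, determine the $\lambda$-dependence of the discrete solution in closed form, and then substitute it back into \eqref{eqLosuylam}. The whole computation reduces to an elementary generating-function manipulation of the Runge--Kutta equations followed by a single application of Cauchy's integral formula to collapse the $\lambda$-integral.

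First I write one Runge--Kutta step for $y_\lambda'=\lambda y_\lambda+g$ in vector form. With the stage vector $Y_n=(Y_{n,i})_{i=1}^s$ and $G_n=(g(t_n+c_kh))_{k=1}^s$ the internal stages and the update read $Y_n = y_n\eins + h\Q(\lambda Y_n + G_n)$ and $y_{n+1} = y_n + hb^T(\lambda Y_n + G_n)$. Introducing the generating functions $Y(\zeta)=\sum_n Y_n\zeta^n$, $y(\zeta)=\sum_n y_n\zeta^n$ and $G(\zeta)=\sum_n G_n\zeta^n$ and summing both recursions against $\zeta^n$ (the shift $n\mapsto n+1$ in the update, together with $y_0=0$, producing the factor $\zeta/(1-\zeta)$) gives $Y(\zeta)=y(\zeta)\eins+h\Q(\lambda Y(\zeta)+G(\zeta))$ and $y(\zeta)=\tfrac{\zeta}{1-\zeta}hb^T(\lambda Y(\zeta)+G(\zeta))$. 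Eliminating $y(\zeta)$ and using the definition of $\Delta(\zeta)$ I obtain $(\Delta(\zeta)-h\lambda\Id)Y(\zeta)=hG(\zeta)$, hence $Y(\zeta)=h(\Delta(\zeta)-h\lambda\Id)^{-1}G(\zeta)$. Here Assumption (b) ensures that $\Delta(\zeta)$ is defined at least for $\zeta$ near $0$, and Assumption (a) lets me read off the step value as the last stage, $y_{n+1}=Y_{n,s}=e_s^TY_n$.

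Next I carry this $\lambda$-dependence into \eqref{eqLosuylam}: since $Y(\zeta)$ is $\C^s$-valued while $\res{\lambda^\alpha}{A}$ is $\bloperator{X}$-valued, integrating $(\Delta(\zeta)-h\lambda\Id)^{-1}$ against $\res{\lambda^\alpha}{A}$ over $\Gamma$ yields an $\bloperator{X^s}$-valued generating function acting on $G(\zeta)$, which is precisely $h\sum_n W_n\zeta^n = \tfrac{h}{2\pi\I}\int_\Gamma (\Delta(\zeta)-h\lambda\Id)^{-1}\otimes\res{\lambda^\alpha}{A}\dint\lambda$ as in \eqref{eqWn}. The decisive step is to evaluate this integral. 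Writing $(\Delta(\zeta)-h\lambda\Id)^{-1}=-\tfrac{1}{h}(\lambda\Id-M)^{-1}$ with $M:=\Delta(\zeta)/h$, I recognise the integrand as the one of the Dunford--Riesz functional calculus for the $s\times s$ matrix $M$ applied to the analytic map $\lambda\mapsto\res{\lambda^\alpha}{A}$. By Assumption (c) the spectrum $\sigma(\Q)$, and hence $\sigma(M)$ for $\zeta$ near $0$, lies in $\sector{\theta_1}$, where $\lambda\mapsto\res{\lambda^\alpha}{A}$ is analytic by \eqref{eqAbschSaPa}; the contour $\Gamma$ separates $\sigma(M)$ from the resolvent's singularities and, relative to the enclosed set $\sigma(M)$, is negatively oriented. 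The two sign changes (from the factor $-\tfrac{1}{h}$ and from the orientation) cancel, and Cauchy's formula gives $h\sum_n W_n\zeta^n=\res{\mybrace{\Delta(\zeta)/h}^\alpha}{A}$, the right-hand side of \eqref{eqWn}.

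Finally I assemble the pieces: comparing the coefficients of $\zeta^{N-1}$ in $\bigl(h\sum_n W_n\zeta^n\bigr)G(\zeta)$ shows that the $(N-1)$st operator-valued stage vector equals $h\sum_{n=0}^{N-1}W_nG_{N-1-n}$, and applying $e_s^T\otimes\Id$ (extracting the last stage, which by Assumption (a) is the step value $u_N$) produces $u_N=h(e_s^T\otimes\Id)\sum_{n=0}^{N-1}W_nG_{N-1-n}$. I expect the main obstacle to be the rigorous justification of the Cauchy step: one must check that $\sigma(\Delta(\zeta)/h)$ remains inside $\sector{\theta_1}$ (where $L$-stability together with Assumption (c) enters, though this need only be verified for $\zeta$ in a neighbourhood of $0$ to determine the coefficients $W_n$), that the decay $\norm{\res{\lambda^\alpha}{A}}\le C|\lambda|^{-\alpha}$ from \eqref{eqAbschSaPa} makes the unbounded contour integral convergent and lets the closing arc at infinity vanish, and that the orientation of $\Gamma$ yields the correct sign.
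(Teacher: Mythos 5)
Your proposal is correct and follows essentially the same route the paper takes (and attributes to Lubich's 1993 construction): discretise the scalar problems $y_\lambda'=\lambda y_\lambda+g$ by the Runge--Kutta scheme, pass to generating functions to get $Y(\zeta)=h(\Delta(\zeta)-h\lambda\Id)^{-1}G(\zeta)$, insert this into \eqref{eqLosuylam}, and collapse the contour integral by Cauchy's (Dunford--Riesz) formula, with Assumptions \ref{asRK}(a)--(c) entering exactly where you place them. The sign bookkeeping (the factor $-\tfrac1h$ against the clockwise orientation of $\Gamma$ relative to $\sigma(\Delta(\zeta)/h)$) and the observation that the identity \eqref{eqWn} need only hold for $\zeta$ near $0$ to pin down the coefficients $W_n$ are both handled correctly.
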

As it is shown in \cite{Lubich1993} the convolution quadrature weights satisfy for $h\leq h_0$ with a sufficiently small $h_0$ and constants $C>0,\ \gamma\geq 0$ the estimate
\begin{equation*}
\norm{W_n}\leq C\,(nh)^{\alpha-1}\, e^{\gamma\, nh}\quad\text{for}\quad n\geq 1,
\end{equation*}
where for $n=0$ the same bound holds as for $n=1$, which implies the stability of the approximation. Furthermore, under some assumptions on the function $g$ and its derivatives the convolution quadrature is convergent of order $\min(p,\ \widetilde{p}+1+\alpha)$:
\begin{theorem}\label{thmKonv}
	Assume \eqref{defSectorial} and consider a Runge-Kutta method of order $p$ and stage order $\widetilde{p}$ that satisfies Assumptions $\ref{asRK}$. Then the error of the convolution quadrature at $t_N=Nh$ is bounded for $h\leq h_0$ with a sufficiently small $h_0$ by 
	\begin{align*}
	\norm{u_N-u(t_N)}&\leq C\,h^p\,\sum\limits_{l=0}^{\widetilde{p}}\mybrace{1+t_N^{\alpha+l-p}}\norm{g^{(l)}(0)}\\
	&+C\mybrace{h^p+h^{\widetilde{p}+1+\alpha}\betrag{\log(h)}}\mybrace{\sum\limits_{l=\widetilde{p}+1}^{p-1}\norm{g^{(l)}(0)}+\max\limits_{0\leq\tau\leq t_n}\norm{g^{(p)}(\tau)} }.
	\end{align*}
	The constants $C$ and $h_0$ depend only on the Runge-Kutta method, on the constants in \eqref{defSectorial} and on the length of the time interval. Especially they are neither affected by $N$ and $h$ with $Nh\leq T$ nor by $g\in\Cfkt{p}{\Rplus,X}$.
\end{theorem}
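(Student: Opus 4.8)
The plan is to follow the general convergence theory for Runge--Kutta based convolution quadrature developed in \cite{Lubich1993}, specialised to the transfer operator $\res{s^\alpha}{A}$ whose inverse Laplace transform is the kernel $\Pa$. The decisive structural input is already in hand: the bound \eqref{eqAbschSaPa} shows that $\res{s^\alpha}{A}$ is analytic in $\sector{\theta_1}$ and obeys $\norm{\res{s^\alpha}{A}}\leq C\betrag{s}^{-\alpha}$ there, so we are exactly in the setting of a sectorial symbol of order $-\alpha$. Starting from the representation \eqref{eqLosuylam}, the quadrature value $u_N$ is obtained by replacing the exact scalar solution $y_\lambda(t_N)$ of $y'=\lambda y+g$ with its Runge--Kutta value $y_{\lambda,N}$, so the error admits the contour representation $u_N-u(t_N)=\frac{1}{2\pi\I}\int_\Gamma\res{\lambda^\alpha}{A}\,(y_{\lambda,N}-y_\lambda(t_N))\dint\lambda$. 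Thus the whole problem reduces to estimating the scalar Runge--Kutta quadrature error $y_{\lambda,N}-y_\lambda(t_N)$ and integrating it against the sectorial weight $\norm{\res{\lambda^\alpha}{A}}\leq C\betrag{\lambda}^{-\alpha}$ along $\Gamma$.

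The second step is to analyse the scalar error by decomposing $g$ at the origin. Writing $g(t)=\sum_{l=0}^{p-1}g^{(l)}(0)\,\Phi_{l+1}(t)+r_p(t)$, where $\Phi_{l+1}(t)=t^l/l!$ and the remainder $r_p$ vanishes to order $p$ at $t=0$ with $r_p^{(p)}=g^{(p)}$, splits the solution into monomial contributions plus a remainder. For a monomial forcing $\Phi_{l+1}$ the scalar Runge--Kutta error can be expressed through the defect of the stability function and of the internal stages; after integrating against $\res{\lambda^\alpha}{A}$ its size is dictated by the near-origin behaviour $\sim t^{\alpha+l}$ of the convolution $\Pa*\Phi_{l+1}$, whose Laplace transform is $\lambda^{-(l+1)}\res{\lambda^\alpha}{A}$. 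The classical order $p$ and the stage order $\widetilde{p}$ then enter differently: for $l\leq\widetilde{p}$ the stage-order conditions are strong enough to give the full rate, producing the first sum with weight $h^p(1+t_N^{\alpha+l-p})$; for $\widetilde{p}<l<p$ the stage-order deficiency degrades the estimate to $h^{\widetilde{p}+1+\alpha}\betrag{\log h}$ beside the $h^p$ term, contributing to the second line. The remainder $r_p$, whose influence is measured by $\max_{0\leq\tau\leq t_N}\norm{g^{(p)}(\tau)}$, is controlled by the same two-regime estimate and supplies the final factor.

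The technical heart, and the step I expect to be the main obstacle, is the sharp estimate that produces exactly the exponents $\alpha+l-p$ and the logarithmic factor. This requires combining three ingredients uniformly along the contour: the order condition $r(z)=\e^{z}+\Oh{z^{p+1}}$ for the stability function, the stage-order-$\widetilde{p}$ bounds on the internal stages encoded in $\Delta(\zeta)$, and the decay $\norm{\res{\lambda^\alpha}{A}}\leq C\betrag{\lambda}^{-\alpha}$. One must track how the fractional power $\alpha$ propagates through the estimates near $\zeta=1$, equivalently as $\lambda\to0$ along $\Gamma$, where the weak singularity of the kernel lives; it is the integrability threshold at this endpoint that generates the boundary power $t_N^{\alpha+l-p}$ and, in the critical case, the factor $\betrag{\log h}$. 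For the parabolic symbol of order $-1$ this analysis is carried out in \cite{Lubich1993}; the present proof amounts to checking that each estimate there survives when the symbol bound carries the exponent $-\alpha$ instead of $-1$, so that the integer powers are consistently replaced by the fractional ones appearing in the statement.
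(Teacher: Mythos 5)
Your proposal is correct and is essentially the argument the paper itself relies on: the paper gives no proof of Theorem \ref{thmKonv} but defers entirely to \cite{Lubich1993}, where the convolution quadrature error bound is established exactly along your lines --- the contour representation of $u_N-u(t_N)$ through the scalar Runge--Kutta errors $y_{\lambda,N}-y_\lambda(t_N)$ weighted by $\res{\lambda^\alpha}{A}$, the Taylor decomposition of $g$ at $t=0$ into monomials plus a remainder vanishing to order $p$, and the separate treatment of the ranges $l\le\widetilde{p}$ and $\widetilde{p}<l<p$ together with the remainder term. One clarification: the convolution quadrature theorem in \cite{Lubich1993} is already formulated for arbitrary sectorial symbols bounded by $M\betrag{s}^{-\mu}$ with general $\mu>0$, not only for the parabolic symbol with $\mu=1$, so Theorem \ref{thmKonv} follows by direct instantiation with $K(\lambda)=\res{\lambda^\alpha}{A}$ and $\mu=\alpha$; the step you flag as ``the main obstacle'' (re-tracking the exponent $-\alpha$ through every estimate) is thus already contained in the cited work, and your sketch is a faithful reconstruction of that proof rather than a genuinely different route.
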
 

The proof of Theorem \ref{thmKonv} can be found in \cite{Lubich1993}. It is also shown there that the equation
\begin{equation*}
\mybrace{\Delta(\zeta)-z\Id}^{-1}=\Q\mybrace{\Id-z\Q}^{-1}+\sum\limits_{n=1}^\infty r(z)^{n-1}\mybrace{\Id-z\Q}^{-1}\eins b^T\mybrace{\Id-z\Q}^{-1}\zeta^n
\end{equation*}
holds under Assumptions \ref{asRK}. On inserting this into Cauchy's integral formula \eqref{eqWn} we get a representation of the last ``row" of the convolution quadrature weights, i.e.
\begin{align}\label{eqwj}
w_n:=\mybrace{e_s^T\otimes\Id}W_n=\frac{1}{2\pi\I}\int_\Gamma r(h\lambda)^n\, q(h\lambda)\otimes\res{\lambda^\alpha}{A}\dint\lambda\quad\text{for}\quad n\geq 0.
\end{align}
The norm of the weights $w_n$ is bounded by the same term as the one of the weights $W_n$ except for a different constant $C$. Together with the fact that $r(h\lambda)^n$ is an approximation to $\exp(nh\lambda)$, this shows that \eqref{eqwj} can be interpreted as a discrete analogue of the operator $\Pa$ \eqref{eqPa}. 

Now we are able to reformulate Proposition \ref{satzFQDlubich} using the contour integral representation instead of generating functions.
\begin{proposition}\label{satzFQD}
For an $s$-stage Runge-Kutta method that satisfies Assumptions \ref{asRK} we get at time $t_N=Nh$ the approximation	
\begin{align*}
u_{N}=h\sum\limits_{n=0}^{N-1} w_n\,G_{N-1-n}
=h\sum\limits_{n=0}^{N-1}\mybrace{\frac{1}{2\pi\I}\int_\Gamma r(h\lambda)^n\, q(h\lambda) \otimes \res{\lambda^\alpha}{A} \dint\lambda}G_{N-1-n}, 
\end{align*}
where $\Gamma\in\sector{\theta_1}$ is chosen such that $h\Gamma\cap\sigma\!\mybrace{\Q^{-1}}=\emptyset$.
\end{proposition}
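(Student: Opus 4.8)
The plan is to obtain the contour-integral form as an essentially immediate consequence of the generating-function representation in Proposition \ref{satzFQDlubich}, together with the representation \eqref{eqwj} of the weights that has already been established above. Since the linear map $e_s^T\otimes\Id$ does not depend on the summation index, I would first pull it inside the finite sum of Proposition \ref{satzFQDlubich} to get
\[
u_N = h\sum_{n=0}^{N-1}\mybrace{e_s^T\otimes\Id}W_n\,G_{N-1-n} = h\sum_{n=0}^{N-1}w_n\,G_{N-1-n},
\]
where $w_n:=\mybrace{e_s^T\otimes\Id}W_n$ is exactly the ``last row'' of the weight. It then only remains to substitute the contour representation \eqref{eqwj} for $w_n$, which yields the claimed double expression for $u_N$.

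For completeness I would re-derive \eqref{eqwj} itself by applying $e_s^T$ on the left of the generating function \eqref{eqWn} after inserting the series expansion of $\mybrace{\Delta(\zeta)-z\Id}^{-1}$ recorded above with $z=h\lambda$. Using Assumptions \ref{asRK}(a)--(b) in the forms $e_s^T\mybrace{\Id-z\Q}^{-1}\eins=r(z)$, $\ b^T\mybrace{\Id-z\Q}^{-1}=q(z)$ and $e_s^T\Q=b^T$, the $\zeta^0$-term collapses to $q(z)$ and the generic $\zeta^n$-term to $r(z)^n q(z)$, so that
\[
e_s^T\mybrace{\Delta(\zeta)-z\Id}^{-1}=\sum_{n=0}^\infty r(z)^n\,q(z)\,\zeta^n .
\]
Interchanging this (for small $\betrag{\zeta}$ absolutely convergent) series with the contour integral in \eqref{eqWn} and comparing coefficients of $\zeta^n$ reproduces \eqref{eqwj}. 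The convergence of the geometric series is controlled by the boundedness of $r$ along $\Gamma$, which follows from the $L$-stability bound $\betrag{r(h\lambda)}\leq1$ on the part of $\Gamma$ with $\Realteil{h\lambda}\leq0$ and from $r(h\lambda)\to0$ on the tails; the $\betrag{\lambda}^{-1}$-decay of $\lambda^{\alpha-1}\res{\lambda^\alpha}{A}$ in \eqref{eqAbschSaPa} secures integrability.

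The step demanding the most care, and the one that produces the contour condition in the statement, is ensuring that the integrand in \eqref{eqwj} is actually regular along $\Gamma$. The rational functions $r(z)$ and $q(z)$ have poles precisely where $\det\mybrace{\Id-z\Q}=0$, that is at $z\in\sigma\!\mybrace{\Q^{-1}}$; consequently $r(h\lambda)$ and $q(h\lambda)$ are singular exactly for $h\lambda\in\sigma\!\mybrace{\Q^{-1}}$. By Assumption \ref{asRK}(c) the finitely many points of $\sigma\!\mybrace{\Q^{-1}}$ lie in $\sector{\theta_1}$, so I would route $\Gamma$ inside $\sector{\theta_1}$, asymptotic to the negative real axis as required for \eqref{eqPa}, while perturbing it if necessary to avoid the finitely many exceptional points $\lambda=\mu^{-1}/h$ with $\mu\in\sigma(\Q)$. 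This is precisely the requirement $h\Gamma\cap\sigma\!\mybrace{\Q^{-1}}=\emptyset$. With the integrand regular on $\Gamma$ and the sum over $n$ finite, the displayed identity for $u_N$ follows, completing the proof.
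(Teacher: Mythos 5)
Your proof is correct and takes essentially the same route as the paper: there, \eqref{eqwj} is likewise obtained by inserting Lubich's expansion of $\mybrace{\Delta(\zeta)-z\Id}^{-1}$ into the Cauchy integral representation \eqref{eqWn} and extracting the last row via $e_s^T\Q=b^T$ and $e_s^T\mybrace{\Id-z\Q}^{-1}\eins=r(z)$, after which Proposition \ref{satzFQD} is just Proposition \ref{satzFQDlubich} rewritten with $w_n=\mybrace{e_s^T\otimes\Id}W_n$. Your added justifications (convergence of the geometric series on $\Gamma$ and routing $\Gamma$ inside $\sector{\theta_1}$ so that $h\Gamma\cap\sigma\!\mybrace{\Q^{-1}}=\emptyset$) merely make explicit what the paper leaves implicit.
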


In the fast algorithm both Propositions \ref{satzFQDlubich} and \ref{satzFQD} will be applied.
\begin{remark}
Instead of basing convolution quadrature on Runge-Kutta methods it is also possible to construct it using multistep methods, e.g. backward differentiation formulas. For further details see \cite{Lubich1986, Schaedle2006}. 
\end{remark}
\section{Discretisation of the contour integrals}\label{chDiscret}
Relevant to the fast algorithm is the question how to discretise the contour integrals \eqref{eqwj} along suitable complex contours. In this section this issue will be discussed following the approach in \cite{Lopez-Fernandez2005}, \cite{Lopez-Fernandez2006} and \cite{Schaedle2006}.
\subsection{Quadrature using hyperbolas}\label{chQuadHyp}
Since it is neither useful to discretise the integrals $w_n$ for every $n$ with the same contour, nor efficient to define a different contour for each integral, we consider the sequence of fast-growing intervals
\begin{equation*}
	\Il=\lgro{\Lambda^{\ell-1}h,\,\Lambda^\ell h},\quad\ell\geq 1,
\end{equation*}
where $\Lambda>1$ is an integer, and fix one contour $\Gl$ for each such interval. If $nh\in\Il$, then \eqref{eqwj} is approximated by a quadrature along $\Gl$. Here, e.g. $\Lambda=5$ showed up as a good choice in the numerical experiments.

Recalling that the operator $A$ is sectorial we choose the contours $\Gl$ as hyperbolas which are parameterised by the maps
\begin{equation}\label{eqHyperbel}
	\gamma_\ell:\ \R\rightarrow \Gl,\quad x\mapsto\gamma_\ell(x)=\mu_\ell\,(1+\sin(ix-\varphi)),\quad\ell\geq 1,
\end{equation}
where $\varphi>0$ is the angle between the upper imaginary axis and the asymptote on its left. Assuming $\varphi\in\loro{0,\theta}$ with $\theta<\theta_1$ the hyperbolas $\Gl$ hence are all contained in the sector of analyticity $\sector{\theta_1}$. In contrast to the angle, the scale parameter $\mu_\ell>0$ depends on $\ell$. Furthermore, the hyperbolas are chosen such that the singularities of $r(h\lambda)^n\,q(h\lambda)$ lie to the right of the contour.

Since we consider an $L$-stable Runge-Kutta scheme, the integrand in \eqref{eqwj} decays rapidly for $\Realteil{\lambda}\rightarrow-\infty$ what makes the integral well-suited for approximations by the trapezoidal rule; see \cite{Weideman2007}. Applying this rule to \eqref{eqwj} with contour $\Gl$ and choosing the number of quadrature points on $\Gl$, independent of $\ell$, as $2K+1$ yields
\begin{align}\label{eqApproxwn}
	w_n\approx\sum\limits_{k=-K}^K\omega_k^{(\ell)}\,r(h\lamkl)^n\, q(h\lamkl)\otimes\res{(\lamkl)^\alpha}{A}\quad\text{for}\quad nh\in\Il,
\end{align}
where the weights $\omega_k^{(\ell)}$ and quadrature points $\lamkl$ are given by
\begin{equation*}
	\omega_k^{(\ell)}=\frac{\tau}{2\pi\I}\,\gamma_\ell\!'(x_k)\quad\text{and}\quad\lamkl=\gamma_\ell(x_k),
\end{equation*}
with $x_k=k\tau$ and the step length parameter $\tau$.

\subsection{Theoretical error bound}
To determine an appropriate number of quadrature points we take a look at the error $E(\tau,K,h,n)$ of the contour integral approximation in \eqref{eqApproxwn}. For the case of the hyperbola the following theoretical error bound, that shows exponential convergence, can be found in \cite{Schaedle2006}.
\begin{theorem}\label{thmFehlerschatzung}
	There are positive constants $C,d,c_0,\ldots,c_4$ and $b$, so that, if \linebreak${1\leq b\mu t\leq n}$, the quadrature error in \eqref{eqApproxwn} for a hyperbola \eqref{eqHyperbel} at ${t=nh\leq T}$ is bounded by
	\begin{align}
	\norm{E(\tau,K,h,n)}\leq C\,t^{\alpha-1}\,(\mu t)^{1-\alpha}\left(\frac{\e^{c_0\mu t}}{\e^{\nicefrac{2\pi d}{\tau}}-1}\right. &+ \e^{(c_1-c_2\cosh(K\tau))\mu t}\label{eqQuadfehlerabsch}\\
	&+\, \e^{c_3\mu t}\left.\mybrace{1+\frac{c_4\cosh(K\tau)\mu t}{\nicefrac{n}{2}}}^{-\nicefrac{n}{2}}\right).\nonumber
	\end{align}
\end{theorem}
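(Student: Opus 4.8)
The starting point is to write the exact weight as a single integral over the real line. Parameterising the contour $\Gl$ in \eqref{eqwj} by the hyperbola \eqref{eqHyperbel}, I would set
\begin{equation*}
F(x):=\frac{1}{2\pi\I}\,r(h\gamma_\ell(x))^n\,q(h\gamma_\ell(x))\otimes\res{\gamma_\ell(x)^\alpha}{A}\,\gamma_\ell'(x),
\end{equation*}
so that $w_n=\int_{-\infty}^\infty F(x)\dint x$ while the quadrature \eqref{eqApproxwn}, with $\omega_k^{(\ell)}=\frac{\tau}{2\pi\I}\gamma_\ell'(x_k)$ and $x_k=k\tau$, is precisely the truncated trapezoidal sum $\tau\sum_{k=-K}^K F(x_k)$. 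The error then splits as
\begin{equation*}
E(\tau,K,h,n)=\mybrace{\int_{-\infty}^\infty F\dint x-\tau\sum_{k=-\infty}^\infty F(x_k)}+\tau\sum_{\betrag{k}>K}F(x_k),
\end{equation*}
the first bracket being the aliasing (discretisation) error of the infinite trapezoidal rule and the second the truncation error. The plan is to bound these two contributions separately, matching the first bracket to the term carrying $(\e^{\nicefrac{2\pi d}{\tau}}-1)^{-1}$ and the truncation sum to the remaining two terms of \eqref{eqQuadfehlerabsch}; this follows the approach of \cite{Schaedle2006}.

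Before that I would collect pointwise bounds on the three factors of $F$ in a horizontal strip $\curly{z\sd{\betrag{\Imagteil{z}}<d}}$. From $\gamma_\ell(x)=\mu_\ell(1-\sin\varphi\cosh x+\I\cos\varphi\sinh x)$ one reads off $\Realteil{\gamma_\ell(x)}=\mu_\ell(1-\sin\varphi\cosh x)$ and $\betrag{\gamma_\ell(x)}\sim\mu_\ell\cosh x$; the resolvent factor is controlled by \eqref{eqAbschSaPa} via $\norm{\res{\gamma_\ell(x)^\alpha}{A}}\le C\betrag{\gamma_\ell(x)}^{-\alpha}$, the vector $q$ is bounded and decays like $\betrag{h\gamma_\ell(x)}^{-1}$ for large argument since $h\Gamma$ avoids $\sigma(\Q^{-1})$, and the decisive factor is $r(h\gamma_\ell(x))^n$. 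Choosing $d$ so small that the shifted contours $\gamma_\ell(\cdot\pm\I d)$ still lie in $\sector{\theta_1}$ with the singularities of $r(h\lambda)^n q(h\lambda)$ to their right keeps $F$ analytic in the strip; and since $r(z)$ approximates $\e^z$ near the origin and $\betrag{r(z)}\le1$ on the stable half-plane, the dominant exponential behaviour is $\betrag{r(h\gamma_\ell(x))^n}\approx\e^{n\Realteil{h\gamma_\ell(x)}}=\e^{\mu_\ell t(1-\sin\varphi\cosh x)}$ using $t=nh$.

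For the aliasing error I would invoke the classical trapezoidal estimate for functions analytic in a strip (a consequence of Poisson summation / the residue theorem): the first bracket is bounded by $(\e^{\nicefrac{2\pi d}{\tau}}-1)^{-1}$ times the $L^1$-norms of $F$ along the lines $\Imagteil{z}=\pm d$. Shifting the contour by $\pm\I d$ replaces $\varphi$ by $\varphi\mp d$, so there the integrand peaks near $x=0$ at size $\e^{\mu_\ell t(1-\sin(\varphi-d))}$ while the resolvent and $\cosh$-factors make the line integral converge; the $\mu_\ell^{1-\alpha}$ scaling this produces equals the stated prefactor, since $t^{\alpha-1}(\mu_\ell t)^{1-\alpha}=\mu_\ell^{1-\alpha}$, and one obtains the first summand with factor $\e^{c_0\mu t}$. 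The truncation error $\tau\sum_{\betrag{k}>K}\norm{F(x_k)}$ I would estimate in two regimes, each tail dominated by its endpoint $k=K$: where $\betrag{h\gamma_\ell(x_k)}$ is of moderate size the comparison $r\approx\e^{\,\cdot}$ gives exponential decay $\e^{-\mu_\ell t\sin\varphi\cosh(k\tau)}$, summed into $\e^{(c_1-c_2\cosh(K\tau))\mu t}$; where $\betrag{h\gamma_\ell(x_k)}$ is large, $L$-stability yields the rational bound $\betrag{r(h\gamma_\ell(x_k))^n}\le(1+c\betrag{h\gamma_\ell(x_k)})^{-n}$, and with $\betrag{h\gamma_\ell(x_K)}\sim h\mu_\ell\cosh(K\tau)$ and $\mu_\ell t=\mu_\ell nh$ this produces the last summand $\e^{c_3\mu t}\mybrace{1+\frac{c_4\cosh(K\tau)\mu t}{\nicefrac{n}{2}}}^{-\nicefrac{n}{2}}$. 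Combining both endpoint contributions with the common prefactor gives \eqref{eqQuadfehlerabsch}.

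I expect the main obstacle to be the uniform control of $r(h\gamma_\ell(z))^n$ for complex $z$ in the strip. Shifting the contour drives $h\lambda$ towards the boundary of the stability region, so one must verify that $\betrag{r}$ stays at or only mildly above $1$ there, quantify how the peak value degrades into the factor $\e^{c_0\mu t}$, and choose $d$, the angle $\varphi$ and the scale $\mu_\ell$ consistently with $\varphi+d<\theta_1$ and $h\Gamma\cap\sigma(\Q^{-1})=\emptyset$. The hypothesis $1\le b\mu t\le n$ is precisely what makes both $r\approx\e^{\,\cdot}$ legitimate (through $h\mu=\mu t/n$ being small) and the endpoint domination valid; pinning down all these constants simultaneously is the delicate part, whereas the two error estimates themselves are standard once the strip bounds are in place.
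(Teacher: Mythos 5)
Your proposal is correct and follows essentially the same route as the paper, which does not prove Theorem \ref{thmFehlerschatzung} itself but refers to \cite{Schaedle2006} and describes exactly your decomposition: the first summand as the aliasing error of the infinite trapezoidal rule for an integrand holomorphic in a horizontal strip (via \cite{Lopez-Fernandez2004}), and the remaining two summands as the truncation error, controlled near the origin by $r(h\lambda)^n\approx\e^{nh\lambda}$ and for large $\betrag{h\lambda}$ by the rational decay coming from $L$-stability. Your identification of the delicate point --- uniform control of $r(h\gamma_\ell(z))^n$ on the shifted contours and the consistent choice of $d$, $\varphi$, $\mu_\ell$ --- is precisely where the cited proof invests its technical work.
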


Here, the first summand in the estimate corresponds to the error in the discretisation of the integral \eqref{eqwj} which we obtain by an infinite quadrature series over all integers $k$ and where the contour $\Gl$ is parameterised over the real line with an integrand holomorphic in an horizontal strip $\curly{z\in\C\sd{\betrag{\Imagteil{z}}\leq d}}$; see \cite{Lopez-Fernandez2004}. Truncating the series to the terms $\kgkg{-K}{k}{K}$ leads to the other two terms in the sum \eqref{eqQuadfehlerabsch}.

By choosing the step size $\tau$ so small that ${c_0\mu t-(\nicefrac{2\pi d}{\tau})\leq \log(\varepsilon)}$, the first term in \eqref{eqQuadfehlerabsch} becomes $\Oh{\varepsilon\,t^{\alpha-1}}$. In order to do so we require an asymptotic proportionality ${\frac{1}{\tau}\sim\log\mybrace{\nicefrac{1}{\varepsilon}}+\mu t}$. If we additionally choose $\mu$ such that
\begin{equation}\label{eqAbschmut}
	\frac{a_1}{\Lambda}\log\mybrace{\frac{1}{\varepsilon}}\leq \mu t\leq a_1\log\mybrace{\frac{1}{\varepsilon}}
\end{equation}
for an arbitrary $a_1>0$ and with $\Lambda>1$, and if $c_1-c_2\cosh(K\tau)\leq \nicefrac{-\Lambda}{a_1}$, then the next summand as well is $\Oh{\varepsilon\,t^{\alpha-1}}$. Here, the latter condition holds with $\cosh(K\tau)=a_2$ for a sufficiently large constant $a_2$. With the above choice of $\tau$ this yields $K\sim\log\mybrace{\nicefrac{1}{\varepsilon}}$. For $n\geq a_3\log(\nicefrac{1}{\varepsilon})$ and a sufficiently large constant $a_3$, the third term then becomes smaller than $\varepsilon\,t^{\alpha-1}$.

All in all, we get the following bound for the required number of quadrature points on the hyperbola; cf. \cite{Schaedle2006}.
\begin{theorem}\label{thmKlogeps}
If \eqref{eqAbschmut} is satisfied for $a_1>0$ and $\Lambda>1$, then in \eqref{eqApproxwn} a quadrature error bounded in norm by $\varepsilon\,t^{\alpha-1}$ is obtained for  
	\begin{equation*}
	K=\Oh{\log\mybrace{\frac{1}{\varepsilon}}}.
	\end{equation*} 
	The estimate holds for $n\geq a_3\log(\nicefrac{1}{\varepsilon})$ with a sufficiently large constant $a_3>0$. Here the number $K$ is independent of $\ell,\ n$ and $h$ with $nh\leq T$.
\end{theorem}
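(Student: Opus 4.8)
The plan is to start from the explicit quadrature error bound \eqref{eqQuadfehlerabsch} furnished by Theorem \ref{thmFehlerschatzung} and to show that, under the hypothesis \eqref{eqAbschmut}, each of the three terms inside the parentheses can be pushed below $\varepsilon$ by choosing the step length $\tau$, the truncation index $K$, and (through the constraint on $n$) the number of steps appropriately, all while keeping $K=\Oh{\log(\nicefrac{1}{\varepsilon})}$. Throughout I would treat the common prefactor $C\,t^{\alpha-1}(\mu t)^{1-\alpha}$ as harmless: since \eqref{eqAbschmut} gives $\mu t\leq a_1\log(\nicefrac{1}{\varepsilon})$, the factor $(\mu t)^{1-\alpha}$ is only polylogarithmic in $\nicefrac{1}{\varepsilon}$, so aiming at a per-term bound of $\varepsilon/(\mu t)^{1-\alpha}$ instead of $\varepsilon$ enlarges $\log(\nicefrac{1}{\varepsilon})$ only by an additive $\Oh{\log\log(\nicefrac{1}{\varepsilon})}$ and therefore does not affect the asymptotic order of $K$.

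For the first summand I would fix $\tau$ so small that $c_0\mu t-\nicefrac{2\pi d}{\tau}\leq\log(\varepsilon)$; because $\e^{2\pi d/\tau}-1\sim\e^{2\pi d/\tau}$ for small $\tau$, this turns $\e^{c_0\mu t}/(\e^{2\pi d/\tau}-1)$ into $\Oh{\varepsilon}$ and forces the proportionality $\nicefrac{1}{\tau}\sim\log(\nicefrac{1}{\varepsilon})+\mu t$. Combined with \eqref{eqAbschmut}, which pins $\mu t$ between $\tfrac{a_1}{\Lambda}\log(\nicefrac{1}{\varepsilon})$ and $a_1\log(\nicefrac{1}{\varepsilon})$, i.e. $\mu t=\Theta(\log(\nicefrac{1}{\varepsilon}))$, this reads $\nicefrac{1}{\tau}\sim\log(\nicefrac{1}{\varepsilon})$. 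For the second summand I would require $c_1-c_2\cosh(K\tau)\leq-\nicefrac{\Lambda}{a_1}$; using the lower bound $\mu t\geq\tfrac{a_1}{\Lambda}\log(\nicefrac{1}{\varepsilon})$ this yields $(c_1-c_2\cosh(K\tau))\mu t\leq-\log(\nicefrac{1}{\varepsilon})$, hence $\e^{(c_1-c_2\cosh(K\tau))\mu t}\leq\varepsilon$. The condition holds as soon as $\cosh(K\tau)=a_2$ for a sufficiently large constant $a_2$, that is $K\tau=\arccosh(a_2)$; since $\nicefrac{1}{\tau}\sim\log(\nicefrac{1}{\varepsilon})$ this is precisely $K=\Oh{\log(\nicefrac{1}{\varepsilon})}$, and it is manifestly independent of $\ell,\ n$ and $h$ because $\tau$ and $a_2$ are.

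The last summand is the delicate one and is where the hypothesis $n\geq a_3\log(\nicefrac{1}{\varepsilon})$ enters. Writing $m=\nicefrac{n}{2}$ and $\beta=c_4\cosh(K\tau)\mu t=c_4a_2\mu t$, the term equals $\e^{c_3\mu t}(1+\beta/m)^{-m}$, and I would need $c_3\mu t-m\log(1+\beta/m)\leq\log(\varepsilon)$. With the previous choices $\beta=\Theta(\log(\nicefrac{1}{\varepsilon}))$, whereas $m\geq\tfrac{a_3}{2}\log(\nicefrac{1}{\varepsilon})$, so $\beta/m$ stays below a constant that shrinks as $a_3$ grows; the elementary estimate $\log(1+x)\geq x/(1+x)$ then gives $m\log(1+\beta/m)\geq\beta/(1+\beta/m)\geq c\,\beta$ for some $c>0$. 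Since $\beta$ is proportional to $a_2\mu t$, enlarging $a_2$ (consistently with the second summand) makes $m\log(1+\beta/m)$ exceed $c_3\mu t+\log(\nicefrac{1}{\varepsilon})$, so $\e^{c_3\mu t}(1+\beta/m)^{-m}\leq\varepsilon$. The main obstacle is exactly this interplay of constants: one must check that a single large $a_2$, hence a single truncation index $K$, simultaneously controls the second and third terms, and that the hypothesis $1\leq b\mu t\leq n$ of Theorem \ref{thmFehlerschatzung} is met — the left inequality holding for $\varepsilon$ small since $\mu t\to\infty$, the right one once $a_3$ is large relative to $a_1$ and $b$. Summing the three $\Oh{\varepsilon}$ contributions and reinstating the polylogarithmic prefactor then yields the total error bound $\varepsilon\,t^{\alpha-1}$, completing the proof.
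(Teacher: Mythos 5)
Your proposal is correct and follows essentially the same route as the paper, whose own justification is the paragraph preceding the theorem: choose $\tau$ so that $c_0\mu t-\nicefrac{2\pi d}{\tau}\leq\log(\varepsilon)$, use the lower bound in \eqref{eqAbschmut} together with $\cosh(K\tau)=a_2$ sufficiently large for the second term (giving $K\sim\log(\nicefrac{1}{\varepsilon})$), and take $n\geq a_3\log(\nicefrac{1}{\varepsilon})$ with $a_3$ sufficiently large for the third. Your additional details — the explicit handling of the prefactor $(\mu t)^{1-\alpha}$, the estimate $\log(1+x)\geq x/(1+x)$ for the third summand, and the verification of the hypothesis $1\leq b\mu t\leq n$ — merely fill in steps the paper asserts without proof, and they work out correctly given the interplay of $a_2$ and $a_3$ that you flag.
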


Indeed we do not obtain a small error bound for the first few $n<a_3\log(\nicefrac{1}{\varepsilon})$ as shown in numerical experiments in \cite{Schaedle2006}. Therefore, these first quadrature weights are computed in a different way that is described in Section \ref{chFirstWeights}.

\subsection{Choice of the parameters}\label{chParameter}
Concerning the inverse Laplace transform and its discretisation along hyperbolas, strategies for choosing parameters are derived in \cite{Lopez-Fernandez2006} and \cite{Weideman2007} under the assumption $t\in\lgro{t_0,\Lambda t_0}$. An error estimate is obtained there that is explicit in all constants involved. Achieving such an error estimate for the discretisation \eqref{eqApproxwn} is much more complicated as the convolution quadrature weights $w_n$ are not given as inverse Laplace transforms, but can just be interpreted as a discrete analogue of those, cf. Section \ref{chRKConvQuad}. Since for large $n$ and small $h$ with $nh\leq T$ the estimate in Theorem \ref{thmFehlerschatzung} tends to an expression of the same type as the error estimate for the approximation of the inverse Laplace transform in \cite{Lopez-Fernandez2006}, we can nevertheless in practice choose the parameters according to \cite{Lopez-Fernandez2006}.

Therefore, for the approximation interval $\Il$ and $2K+1$ nodes on the hyperbola as in Theorem \ref{thmKlogeps}, we use the following strategy to determine parameters for the discretisation:
\begin{enumerate}
	\item Choose $\varphi=d=\nicefrac{\theta}{2}$ with $\theta<\theta_1$.
	\item Minimize for $0<\rho<1$ the expression
	\begin{equation*}
	\eps\,\epsilon_K(\rho)^{\rho-1}+\epsilon_K(\rho)^\rho,
	\end{equation*}
	where
	\begin{equation*}
	\epsilon_K(\rho)=\exp\mybrace{\frac{-2\pi d}{a(\rho)}K},\quad a(\rho)=\arccosh\mybrace{\frac{\Lambda}{(1-\rho)\sin(\varphi)}}
	\end{equation*}
	and $\eps$ is the machine precision.
	\item Take 
	\begin{equation*}
	\tau=\frac{1}{K}a(\rho_\mathrm{opt})\quad\text{and}\quad \mu_\ell=\frac{2\pi d\, K\, (1-\rho_\mathrm{opt})}{\Lambda^\ell h\, a(\rho_\mathrm{opt})}.
	\end{equation*}
\end{enumerate}
With this choice of parameters we are now able to draw the connection between Theorem \ref{thmKlogeps} and the intervals $\Il$:\\ If $t=nh\in\Il$, then $\Lambda^{\ell-1}\leq n <\Lambda^\ell$ and with $\mu_\ell$ as above and $a_1=\frac{2\pi d(1-\rho_\mathrm{opt})}{a(\rho_\mathrm{opt})}>0$ we obtain  
\begin{align}
\frac{a_1}{\Lambda}\,K  = \frac{a_1\,\Lambda^{\ell-1}}{\Lambda^\ell}\,K \leq \frac{a_1\,n}{\Lambda^\ell}\,K = \mu_\ell\, t < \frac{a_1\,\Lambda^\ell}{\Lambda^\ell}\,K=a_1\,K.\nonumber
\end{align}
Thus, \eqref{eqAbschmut} holds. With $K=\Oh{\log\mybrace{\nicefrac{1}{\varepsilon}}}$ Theorem \ref{thmKlogeps} implies that the quadrature error is bounded by $\varepsilon\,t^{\alpha-1}$ for $nh\in\Il$ independent of $\ell$, except for the first few $n$. Here, our choice of intervals $\Il$ comes in useful. 

\section{The fast and parallel algorithm}\label{chAlgo}
With the results of the previous section we are now able to describe the fast algorithm for computing the solution \eqref{eqLosu} after $N$ time steps with step size $h$.
\subsection{Computing the first summands of the approximation}\label{chFirstWeights}
We already know that the error bound in Theorem \ref{thmKlogeps} does not hold for all convolution weights and the approximation properties of the discretised contour integral are poor for the first few weights $w_n$ with $n\leq\kappa$ (e.g. $\kappa=20$ in our numerical experiments, or ${\kappa  = a_3\log(\nicefrac{1}{\varepsilon})}$ asymptotically). Therefore for $n\leq\kappa$ we consider the weights $W_n$ from Proposition \ref{satzFQDlubich} and make use of their representation as an integral over a circle with radius~$\rho$, i.e.
\begin{equation*}
hW_n=\frac{1}{2\pi\I}\int_{\betrag{\zeta}=\rho}\zeta^{-n-1}\res{\mybrace{\frac{\Delta(\zeta)}{h}}^\alpha}{A}\dint\zeta;\quad\text{see \cite{Lubich1993}},
\end{equation*} 
whose approximation by the trapezoidal rule yields
\begin{equation}\label{eqWneinzelndirekt}
hW_n\approx \frac{\rho^{-n}}{J}\,\sum\limits_{j=0}^{J-1}\res{\mybrace{\frac{\Delta(\zeta_j)}{h}}^\alpha}{A}\,\e^{\nicefrac{-2\pi\I nj}{J}}\quad\text{for}\quad  \vonbis{n}{0}{\kappa}
\end{equation} 
with $\zeta_j=\rho\,\e^{\nicefrac{2\pi\I j}{J}}$ and the parameter $J$ which we choose according to \cite{Lubich1988,Lubich1993}: Assuming that the values of the Laplace transform are computed with an accuracy $\varepsilon$ and choosing $J=\kappa$ and $\rho^J=\sqrt{\varepsilon}$ an error of $\Oh{\sqrt{\varepsilon}}$ is obtained in \eqref{eqWneinzelndirekt}. If we even choose $J\geq \kappa\log(\nicefrac{1}{\varepsilon})$ and $\rho=\e^{-\gamma h}$ with $\gamma>0$, then the error becomes $\Oh{\varepsilon}$.

Different from \cite{Lubich1993} we do not use this formula to compute each $W_n$ in an extra step, but consider the sum
\begin{equation*}
h\,\sum\limits_{n=0}^{\kappa}W_n\,G_{N-1-n}\approx \sum\limits_{n=0}^{\kappa}\frac{\rho^{-n}}{J}\,\sum\limits_{j=0}^{J-1}\res{\mybrace{\frac{\Delta(\zeta_j)}{h}}^\alpha}{A}\,G_{N-1-n}\,\e^{\nicefrac{-2\pi\I nj}{J}},
\end{equation*}
where we use the eigenvalue decomposition of the $s\!\times\!s$-matrices $\nicefrac{\Delta(\zeta_j)}{h}$ into the product $\nicefrac{\Delta(\zeta_j)}{h}=U_j\, \mathcal{D}_j\,U_j^{-1}$ with diagonal matrix $\mathcal{D}_j$ to obtain
\begin{align}\label{eqWndirekt}
	h\sum\limits_{n=0}^{\kappa}W_n\,G_{N-1-n}\approx\sum\limits_{j=0}^{J-1}\mybrace{U_j\otimes\Id}x_j.
\end{align}
Here $x_j$ is the solution of the decoupled system
\begin{equation}\label{eqSystemdirFalt}
\mybrace{\mybrace{\mathcal{D}_j}^\alpha\otimes\Id-\Id_s\otimes A}x_j=\mybrace{U_j^{-1}\otimes\Id}\,\sum\limits_{n=0}^\kappa\frac{\rho^{-n}}{J}\,G_{N-1-n}\,\e^{\nicefrac{-2\pi\I nj}{J}}.
\end{equation}
In order to compute \eqref{eqWndirekt} we need to solve $sJ$ linear systems what can be done in parallel. Here, the $J$ right hand sides of \eqref{eqSystemdirFalt} are computed in $\Oh{J\log(J)}$ operations using fast Fourier transform.
\subsection{Fast convolution approximation}
In order to give the fast approximation of the quadrature weights $w_n$ for $n>\kappa$ we modify the intervals $\Il$ from Section \ref{chQuadHyp} so that they do not contain the first $\kappa+1$ quadrature weights. That means, we now consider for $\Lambda>1$ the sequence 
\begin{align*}
	\widetilde{I}_\ell=\lgro{\Lambda^{\ell-1}(\kappa+1)h,\,\Lambda^\ell(\kappa+1)h},\quad\ell\geq 1,
\end{align*}
where $L$ is the smallest integer such that $N\leq \Lambda^L(\kappa+1)$. Thus, we can split the approximation $u_N$ into the following $L+1$ sums
\begin{equation*}
	u_N=h\sum\limits_{n=0}^{N-1} w_n\,G_{N-1-n}=\un{0} + \un{1}+\ldots+\un{L},
\end{equation*}
where
\begin{equation}\label{equN0}
	\un{0}:= h\sum\limits_{n=0}^{\kappa}w_n\,G_{N-1-n}=\mybrace{e_s\otimes \Id}\,h\sum\limits_{n=0}^\kappa W_n\,G_{N-1-n} 
\end{equation}
is approximated via \eqref{eqWndirekt}. For $\vonbis{\ell}{1}{L}$ we define the sums $\un{\ell}$ corresponding to the intervals $\widetilde{I}_\ell$ and hyperbolas $\Gl$ given in \eqref{eqHyperbel} as
\begin{align*}
	\un{\ell} := h\sum\limits_{nh\in \widetilde{I}_\ell}w_n\,G_{N-1-n}=h\sum\limits_{nh\in \widetilde{I}_\ell}\frac{1}{2\pi\I}\int_{\Gl}\mybrace{r(h\lambda)^n\,q(h\lambda)\otimes\res{\lambda^\alpha}{A}}\,G_{N-1-n}\dint\lambda,
\end{align*}
where we now choose $\mu_\ell=\frac{2\pi d\,K\,(1-\rho_\mathrm{opt})}{\Lambda^\ell\,(\kappa+1)\,h\,a(\rho_\mathrm{opt})}$ according to the results in Section \ref{chParameter}.

On discretising the convolution quadrature weights $w_n$ via \eqref{eqApproxwn} we obtain the approximation $\Un{\ell}$ to $\un{\ell}$ that, with $m_\ell:=\Lambda^{\ell}(\kappa+1)$ for $\vonbis{\ell}{0}{L-1}$ and $m_L:=N$, can be formulated as
\begin{align*}
	\Un{\ell}&=h\sum\limits_{n=m_{\ell-1}}^{m_\ell-1}\sum\limits_{k=-K}^K\omega_k^{(\ell)}\,\mybrace{r(h\lamkl)^n\,q(h\lamkl)\otimes\res{(\lamkl)^\alpha}{A}}\,G_{N-1-n}\\
	&=h \sum\limits_{k=-K}^K\omega_k^{(\ell)}\sum\limits_{n=N-m_{\ell}}^{N-1-m_{\ell-1}}\mybrace{r(h\lamkl)^{N-1-n}\,q(h\lamkl)\otimes\res{(\lamkl)^\alpha}{A}}\,G_{n}\\
	&=\sum\limits_{k=-K}^K\omega_k^{(\ell)}\,r(h\lamkl)^{m_{\ell-1}}\,\res{(\lamkl)^\alpha}{A}\,y_k^{(\ell)},
\end{align*} 
where
\begin{align*}
	y_k^{(\ell)}&= h \sum\limits_{n=N-m_{\ell}}^{N-1-m_{\ell-1}} \mybrace{r(h\lamkl)^{N-1-m_{\ell-1}-n}\,q(h\lamkl)\otimes\Id}\,G_{n}
\end{align*}
is the Runge-Kutta approximation to the solution at time $t=(N-m_{\ell-1})h$ of the linear initial-value problem
\begin{align}\label{eqRKschnell}
	y'(t)=\lamkl\,y(t)+g(t),\quad y((N-m_\ell)h)=0. %Referenz dazu?
\end{align}
Thus, in order to compute $\Un{1},\ldots,\Un{L}$ we require $N-\kappa-1$ Runge-Kutta steps for a total of $2K+1$ differential equations \eqref{eqRKschnell}. Bringing to mind that ${K=\Oh{\log(\nicefrac{1}{\varepsilon})}}$ that means the computation of $\Oh{N\log(\nicefrac{1}{\varepsilon})}$ Runge-Kutta steps.\\
With the solution $x_k^{(\ell)}$ of the linear system of equations
\begin{equation}\label{eqGlgsRK}
	\mybrace{(\lamkl)^\alpha\,\Id-A} \,x_k^{(\ell)}=y_k^{(\ell)}
\end{equation}
the approximation $\Un{\ell}$ is obtained as the linear combination
\begin{equation}\label{eqUN}
	\Un{\ell} = \sum\limits_{k=-K}^{K} c_k^{(\ell)}\,x_k^{(\ell)},\quad\text{where}\quad c_k^{(\ell)}=\omega_k^{(\ell)}\,r(h\lamkl)^{m_{\ell-1}}.
\end{equation}
Therefore, in general, it is required to solve $2K+1$ linear systems \eqref{eqGlgsRK} when computing $\Un{\ell}$. If the inhomogeneity \eqref{eqLosu} though is real-valued, the following consideration shows that this number reduces to $K+1$: Since the quadrature points $\lamkl$ lie symmetric with respect to the real axis, this kind of symmetry is inherited by $y_k^{(\ell)}$ and $x_k^{(\ell)}$. Together with $\omega_{-k}^{(\ell)}=\omega_{k}^{(\ell)}$ this leads to the cancellation of the imaginary parts in \eqref{eqUN} and therefore only the sum of the real parts of half the terms needs to be computed.

In addition to the previously stated number of Runge-Kutta steps, we thus only require $\Oh{\log(N)\log(\nicefrac{1}{\varepsilon})}$ solutions to the linear system \eqref{eqGlgsRK} to obtain an approximation with an accuracy up to $\varepsilon$ what can be done in parallel.

To put it in a nutshell, the fast algorithm consists of the following steps:
\begin{enumerate}
	\item Compute \eqref{equN0} using \eqref{eqWndirekt} and solving $sJ$ systems in parallel.
	\item Compute in parallel for $\vonbis{\ell}{1}{L}$ \eqref{eqUN} with \eqref{eqRKschnell} and \eqref{eqGlgsRK}. Here \eqref{eqRKschnell} can be computed in parallel for $k$ and for the entries of the inhomogeneity $g$ whereas \eqref{eqGlgsRK} can be parallelised over $k$.
	\item Obtain the approximation $U_N$ to $u_N$ as the sum
	\begin{equation*}
	U_N=\Un{0}+\Un{1}+\ldots+\Un{L}.
	\end{equation*} 
\end{enumerate}  
 
 \begin{remark}
 	If we consider an additional operator $M$ and assume that \linebreak${(\lambda M-A)^{-1}}$ is analytic in $\sector{\theta_0}$ and bounded there by $\norm{(\lambda M-A)^{-1}}\leq\nicefrac{C}{\betrag{\lambda}}$ for some $C>0$, then we can extend the algorithm to the fractional evolution equation
 	\begin{equation*}
 	M\CDiff{u(t)}{\alpha}=Au(t)+g(t)\quad\text{for}\quad t\in I\quad \text{with}\quad u(0)=u_0,
 	\end{equation*}
 	which can be found in the following numerical experiments. 
 	In \eqref{eqGlgsRK} now $x_k^{(\ell)}$ is the solution to the linear system $\mybrace{(\lamkl)^\alpha\,M-A} \,x_k^{(\ell)}=y_k^{(\ell)}$ and the left side of \eqref{eqSystemdirFalt} becomes $\mybrace{\mybrace{\mathcal{D}_j}^\alpha\otimes M-\Id\otimes A}x_j$.
 \end{remark}
 
\section{Numerical Experiments}\label{chNumEx}
We give the following examples in order to illustrate the application and behaviour of the fast and parallel algorithm and to show the paper's main results.
\subsection{Fractional evolution equation with $2\!\times\!2$-matrix $A$}\label{chExample1}
As a first example we consider the inhomogeneous evolution equation
\begin{align}
\CDiff{u(t)}{\nicefrac{1}{2}} = Au(t) + g(t) \quad\text{for}\quad t>0\quad \text{with}\quad A=\begin{bmatrix}-1&1\\-1&-1\end{bmatrix} \label{eqBsp1}
\end{align}
under the initial data $u(0)=0$. By using the function \texttt{fracdiff} in Maple 18, we compute the inhomogeneity $g(t)$ such that the exact solution of \eqref{eqBsp1} is given by
\begin{equation*}
u(t)=\begin{bmatrix}
\sin(2t)^6\\ \mybrace{\tfrac{1}{2}-\tfrac{1}{2}\cos\mybrace{\sqrt{5}t}}^6
\end{bmatrix}.
\end{equation*}
Thus, we obtain an inhomogeneity that is five times continuously differentiable in $I$ and each of these derivations vanishes at $t=0$. Just like the inhomogeneity in the next example, $g(t)$ includes Fresnel integrals; see \cite{Gautschi1972}, which we compute using the implementation provided in \cite{DErrico2012}.

To have a look on the convergence, we use $\Lambda=5$ in the numerical experiment and test two different numbers of quadrature points ($K=10$ and $K=25$). The parameters for the hyperbolas $\Gl$ are chosen as described in Section \ref{chParameter} where we set $\theta_1=\nicefrac{\pi}{2}$ according to \eqref{eqWahltheta1}. To avoid errors arising from the poor approximation of the first few quadrature weights, we compute $\un{0}$ with $\kappa=20$ as described in Section \ref{chFirstWeights}, where we set $J=160$. 

Figure \ref{figkonvergenz} shows the absolute errors $\norm{U_N-u(Nh)}_\infty$ at time $t=Nh=10$ for $K=10$ and $K=25$ and for the Radau IIA methods of orders 1, 3 and 5 versus the step size $h$. Here the dashed lines represent the theoretical orders of convergence for the Runge-Kutta based convolution quadrature which are 1, 3 and 4.5 respectively; cf. Theorem \ref{thmKonv}. For the choice of $K=10$ we notice an error saturation which is due to the insufficient approximation of the contour integrals. This saturation is prevented if we increase the number of quadrature points to e.g. $K=25$ as in the right figure.
\begin{figure}[htbp]
	\centering
	\includegraphics[width=0.95\textwidth]{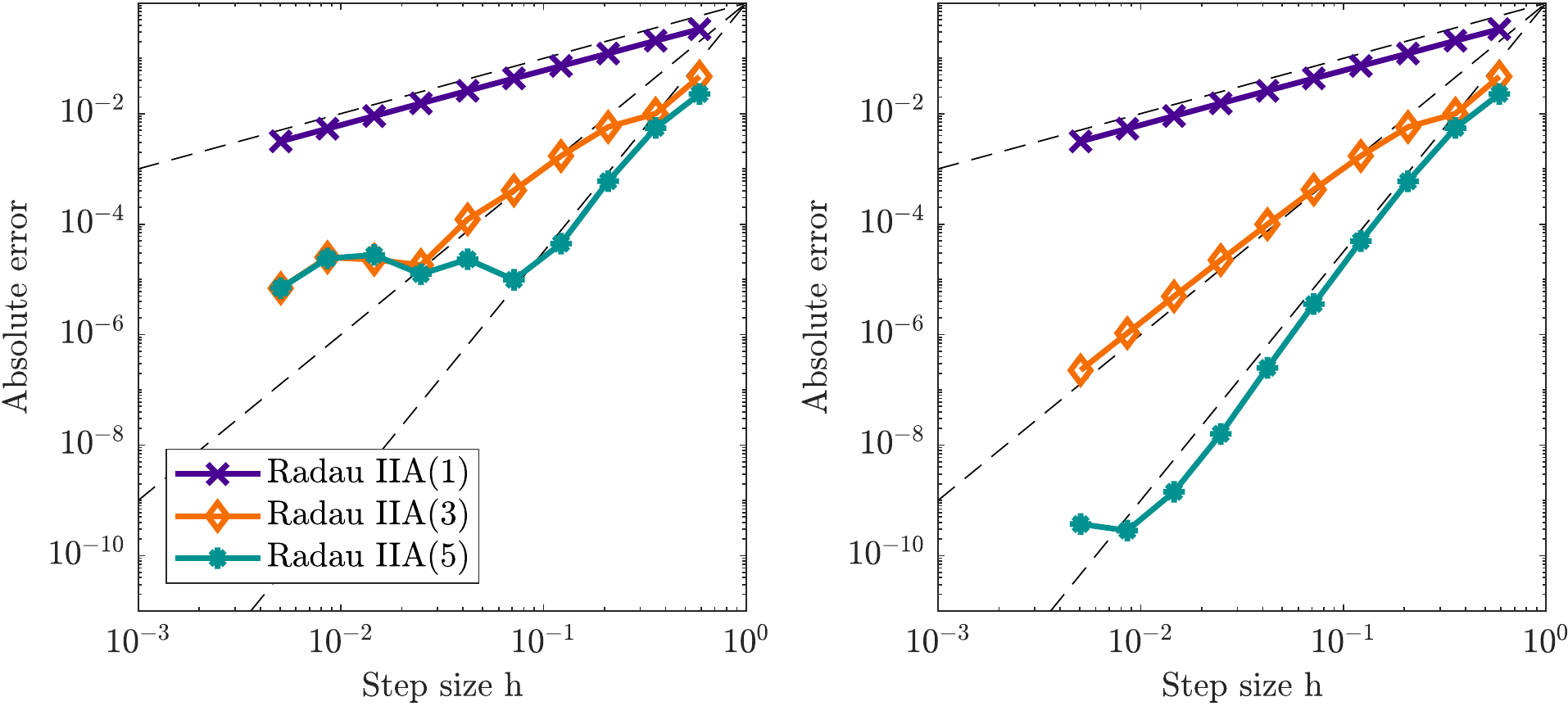}
	\caption{Absolute error versus time step size $h$ for three different Runge-Kutta methods, with $K=10$ (left) and $K=25$ (right).}\label{figkonvergenz}
\end{figure}

\subsection{Subdiffusion in three space dimension with periodic boundary conditions}\label{chExample2}
In order to illustrate the findings in Section \ref{chAlgo} we now study a subdiffusion equation in three space dimensions
\begin{align}
\CDiff{u(x,y,z,t)}{\nicefrac{1}{2}} =  \Delta_{x,y,z} u(x,y,z,t) + g(x,y,z,t)\ \ \text{for}\ \  x,y,z\in\loro{0,2\pi},\ t>0, \label{eqBsp2}
\end{align}
subject to the initial data $u(x,y,z,0)=0$ and with $2\pi$-periodic boundary conditions. We consider the inhomogeneity
\begin{align*}
g(x,y,z,t) = h_-(x,y,z)f_1(t)+h_+(x,y,z)f_2(t),
\end{align*}
where $f_1,f_2$ are functions that solely depend on time and \[h_\mp(x,y,z):=\cos(x)+\cos(y)+\cos(z)\mp (\sin(x)+\sin(y)+\sin(z)),\] such that the exact solution to \eqref{eqBsp2} is given by
\begin{align*}
u(x,t) = h_-(x,y,z)\,\sin(\pi t)-h_+(x,y,z)\,(\cos(\pi t)-1).
\end{align*}
For the spatial discretisation we make use of the following compact finite difference scheme that we, for reasons of clarity, state here for a one-dimensional problem:\\ Consider the differential equation $\Delta_{x}w(x)=f(x)$ with $x\in\loro{0,2\pi}$ under $2\pi$-periodic boundary conditions and an equidistant grid $x_j=j\eta$ in $\lgrg{0,2\pi}$ for a step size $\eta$. Then the scheme 
\begin{align}\label{eqkomFD}
\tfrac{1}{{\eta}^2} (w(x_{j-1})-2w(x_j)+w(x_{j+1}))\approx \tfrac{1}{12} f(x_{j-1})+\tfrac{5}{6}f(x_j)+\tfrac{1}{12}f(x_{j+1})
\end{align}
gives a fourth order approximation in space \cite{Lele1992}.

The execution time in seconds which is required to approximate the solution at time $t=123.45$ with varying numbers of time steps $N$ is shown in Figure \ref{figBarDiag}. Here, we distinguish between the time we need to compute the summand $\un{0}$ via \eqref{eqWndirekt}, to perform the necessary Runge-Kutta steps \eqref{eqRKschnell} using Radau IIA(5) and to solve the linear systems \eqref{eqGlgsRK}. Since the evaluation of the inhomogeneity in \eqref{eqRKschnell} is very expensive, we precompute the functions $f_1$ and $f_2$ for all required values of $t$.

 As in the previous example we set $\Lambda=5$, however, as we are interested in the time rather than accuracy, we chose $\kappa=12$, $J=\kappa+2$ and $K=20$. The computations are distributed to a pool of 8 workers using the Matlab Parallel Computing Toolbox.  We clearly see the linear growth of the time used for the Runge-Kutta steps and the logarithmic growth related to the linear systems whereas the computational work of the first summands of the approximation stays constant.
\begin{figure}[htbp]
	\centering
	\includegraphics[width=0.95\textwidth]{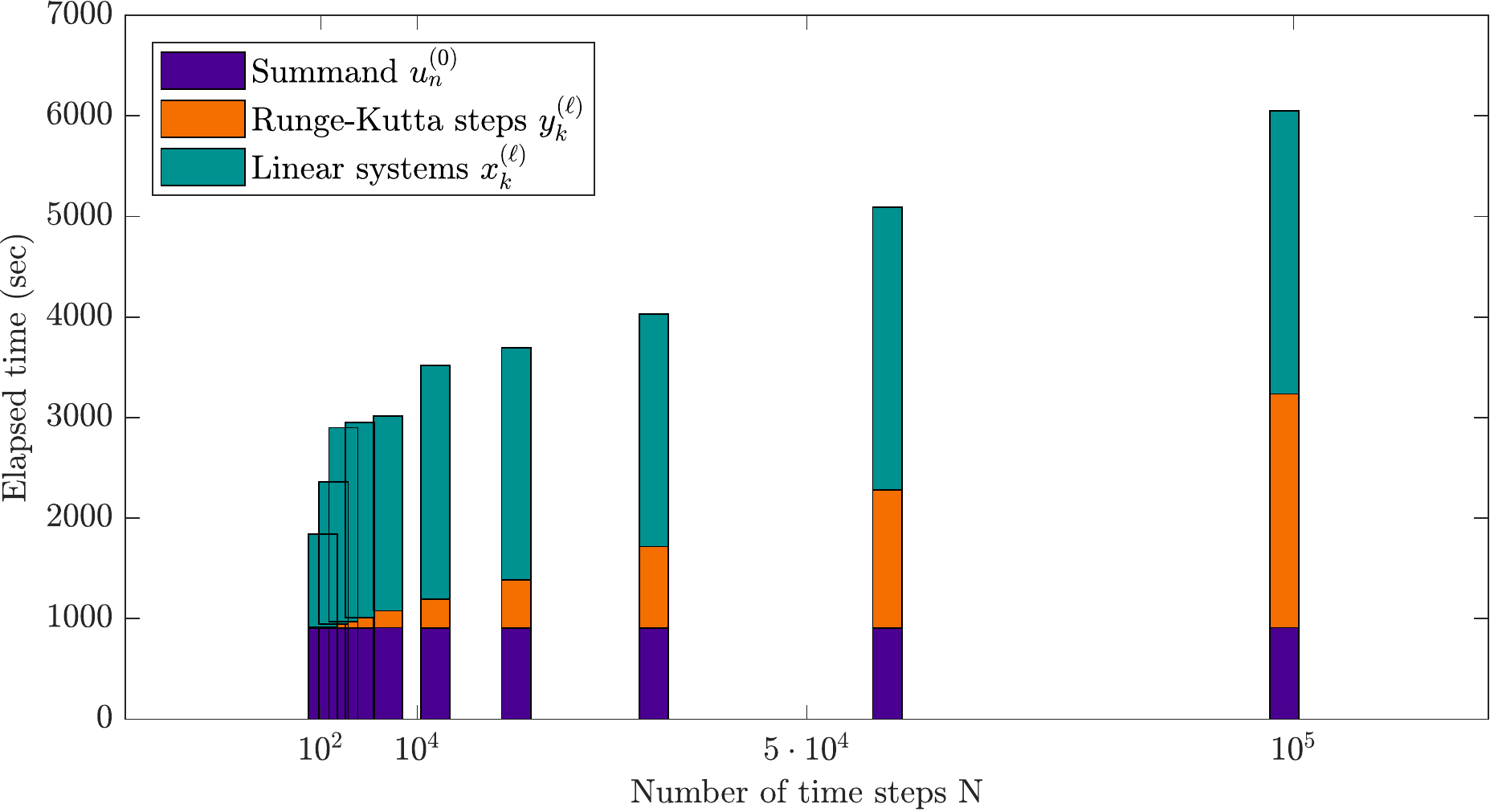}
	\caption{Elapsed time in seconds versus the number of time steps N.}\label{figBarDiag}
\end{figure}

Figure \ref{figPoolsize} illustrates the strong scaling of the algorithm. Here, we consider a fixed problem size but increase the number of parallel workers from 3 to a total of 32 workers. As in the previous figure we compute the solution at $t=123.45$, this time with a fixed number of steps $N=10^5$, and distinguish again between the time we need for the approximation of $\un{0}$, for the Runge-Kutta steps and for the solution to the linear systems. In the ideal case the time for the computation with $j$ workers would be $j$ times faster than the computation with one worker as it is shown in the dashed line. Whereas the Runge-Kutta steps and the solutions of the linear systems show a good parallelisation, the computation of the summand $\un{0}$ indicates an insufficient load balancing.
\begin{figure}[htbp]
	\centering
    \includegraphics[width=0.95\textwidth]{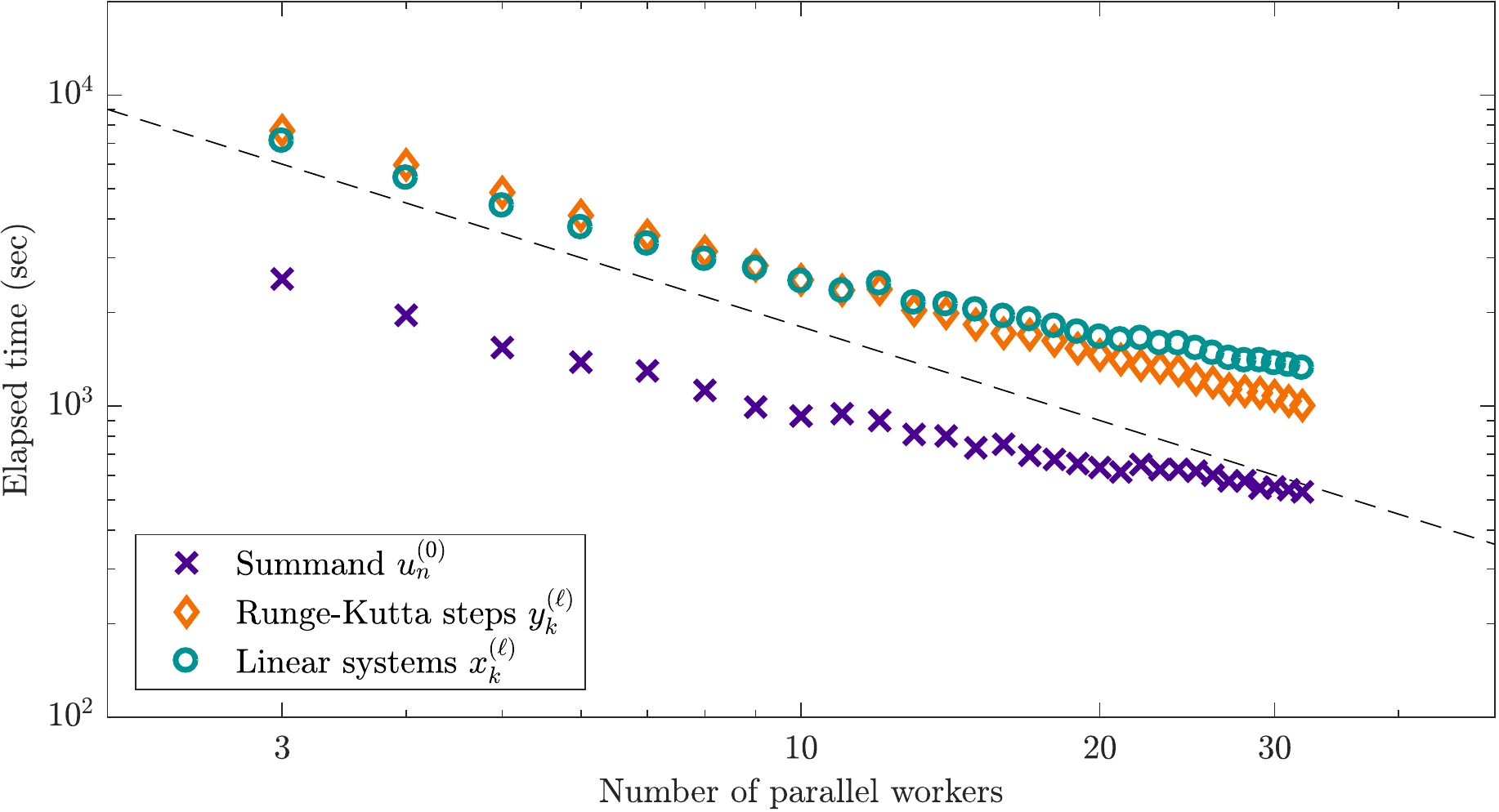}
    \caption{Elapsed time in seconds versus the number of workers which we distribute the computations to. The dashed line is the ideal line for strong scaling.}\label{figPoolsize}
\end{figure}

\subsection{Time-fractional Schr\"odinger equation with transparent boundary conditions}
The last example illustrates the algorithm's extensibility to more complicated problems. We consider the homogeneous fractional Schr\"odinger equation on the real line
\begin{align}\label{eqTBCv}
\CDiff{v(x,t)}{\alpha} = \I \Delta_{x} v(x,t)\quad\text{for}\quad x\in\R,\quad t>0
\end{align}
with an arbitrary $\alpha$ in $\loro{0,1}$ and the asymptotic condition $v(x,t)\rightarrow 0$ for $x\rightarrow\pm\infty$. The fractional equation is complemented with the initial condition $v(x,0)=u_0(x)$ for $x\in\R$. As it is known, the eigenvalues of the operator $\I\Delta_x$ lie on the negative imaginary axis \cite{Henry1981}, hence strictly speaking, it is not sectorial. However, the bound \eqref{eqAbschSaPa} is the one that is decisive for the application of the algorithm. Thus, to make sure that $\res{\lambda^\alpha}{\I\Delta_x}$ for $\lambda\in\sector{\theta_1}$ is analytic in $\sector{\nicefrac{\pi}{2}}$, we have to choose $\theta_1$ according to \eqref{eqWahltheta1}.

As mentioned in the beginning of Section \ref{chRKConvQuad}, we first transform equation \eqref{eqTBCv} into an equivalent differential equation with zero initial data by substituting $v = u+u_0$. Thus, we get the inhomogeneous fractional Schr\"odinger equation
\begin{align}\label{eqTBC}
\CDiff{u(x,t)}{\alpha} = \I\Delta_x u(x,t)+\I\Delta_x u_0(x)\quad\text{for}\quad x\in\R,\ t>0\quad\text{with}\quad u(x,0)=0.
\end{align}
In order to reduce the computation to a finite domain $\lgrg{-a,a}$ we assume $\mathrm{supp}(u_0(x))\subseteq\lgrg{-a,a}$, so that we can neglect the inhomogeneity $\I\Delta_x u_0(x)$ in the following derivation of the transparent boundary conditions. The method used for this purpose is well known; see e.g. \cite{Lubich2002} for the Schr\"odinger equation in the case of a discrete spatial Laplacian and \cite{Gao2012,Schaedle2006} for subdiffusion equations in the continuous case. It is based on the Laplace transform of \eqref{eqTBC} which reads
\begin{align}\label{eqBsp2Lapl}
\sa U(x,\lambda) = \I\Delta_x U(x,\lambda)\quad  \text{for}\quad \Realteil{\lambda}>0\quad \text{and}\quad \betrag{x}>a.
\end{align}
We only show the derivation of the right boundary condition at point $a$ in detail since the left condition at $-a$ is found analogously. That means, we consider equation \eqref{eqBsp2Lapl} with $x>a$, that spatially discretised with \eqref{eqkomFD} on a grid $x_j=j\eta$ with $j\geq N+1$ and $x_N=a$ for a fixed $N\in\N$ and step size $\eta$, becomes 
\begin{align*}
\varphi U^R(x_{j-1},s\lambda)+\psi U^R(x_j,\lambda)+\varphi U^R(x_{j+1},\lambda) &= 0\quad\text{for } j\geq N+1,
\end{align*}
with $\varphi=\frac{1}{12}\sa-\frac{\I}{\eta^2}$ and $\psi=\frac{5}{6}\sa+\frac{2\I}{\eta^2}$.  
The solution to the characteristic equation ${\varphi z^2+\psi z+\varphi=0}$ yields
\begin{align*}
z_{1,2}&=\frac{-\psi\pm\sqrt{\psi^2-4\varphi^2}}{2\varphi},
\end{align*}
where $\betrag{z_2}>1$ and $\betrag{z_1}<1$. Therefore, the decaying solution of the above three-term recursion is given by $U^R(x_j,\lambda)=U^I(a,\lambda)\,z_1^{N-j}$ for $j\geq N+1$. Here $U^I$ belongs to the spatial discretisation of the Laplace transform in time of \eqref{eqTBC} in the inner domain $\lgrg{-a,a}$.

To obtain a transparent boundary, we set the conditions
\begin{align*}
U^I(a,\lambda)=U^R(a,\lambda)\quad\text{and}\quad \delta_\nu U^I(a,\lambda)=\delta_\nu U^R(a,\lambda)
\intertext{with}\delta_\nu U(a,\lambda)=\delta_\nu U(x_N,\lambda)=\tfrac{1}{h}\mybrace{U(x_{N+1},\lambda)-U(x_N,\lambda)}.
\end{align*}
This leads to the equation
\begin{align*}
\frac{h}{z_1(\lambda)-1}\delta_\nu U^I(a,\lambda)=U^I(a,\lambda).
\end{align*}
Transforming back gives the transparent boundary condition at $x=a$, which can be formulated as the ``discrete Neumann-to-Dirichlet operator"
\begin{align}\label{eqTransBoundary}
u(a,t)=\int_{0}^{t}f(t-\tau)\,\delta_\nu u(a,\tau)\dint\tau,
\end{align}
where $f(t)$ is now the function with Laplace transform $F(\lambda)=\frac{h}{z_1(\lambda)-1}$.

In our numerical experiment we consider the fractional Schr\"odinger equation \eqref{eqTBC} with initial value $u_0(x)=10\exp({-(4x)^2+10\I x})$ and with a fractional derivative of order $\alpha=\nicefrac{3}{4}$. Hence, according to our previous considerations, we set $\theta_1=\nicefrac{\pi}{6}$ to make sure that \eqref{eqAbschSaPa} is satisfied for $\lambda\in\sector{\theta_1}$.

In order to check the correctness of the transparent boundary conditions \eqref{eqTransBoundary}, we compute the solution of equation \eqref{eqTBCv} on the finite domain $\lgrg{-2,2}$ with 801 spatial grid points and a fixed number of quadrature points $K=50$. As in the first example, we set $\Lambda=5$ and $\kappa=20$, where this time we choose ${J=4\kappa}$. To obtain a reference solution we consider the finite domain $\lgrg{-8,8}$, a spatial grid with 1601 grid points and set $\kappa=60$ and $J=4\kappa$ in the computation of the first convolution weights and $K=110$ for the nodes of the integration contour. In both cases, we make use of the convolution quadrature based on the Radau IIA(5) method and choose the parameters for the hyperbolas according to Section \ref{chParameter} with $\theta_1$ as discussed above. Figure \ref{figTBCSolErr} shows the modulus of the computed solution and the absolute error at times $t=0.05,0.1,\ldots,1$ with time step size $h=0.00025$ at each grid point in the domain $\lgrg{-2,2}$. 
\begin{figure}[htbp]
		\centering
		\includegraphics[width=0.95\linewidth]{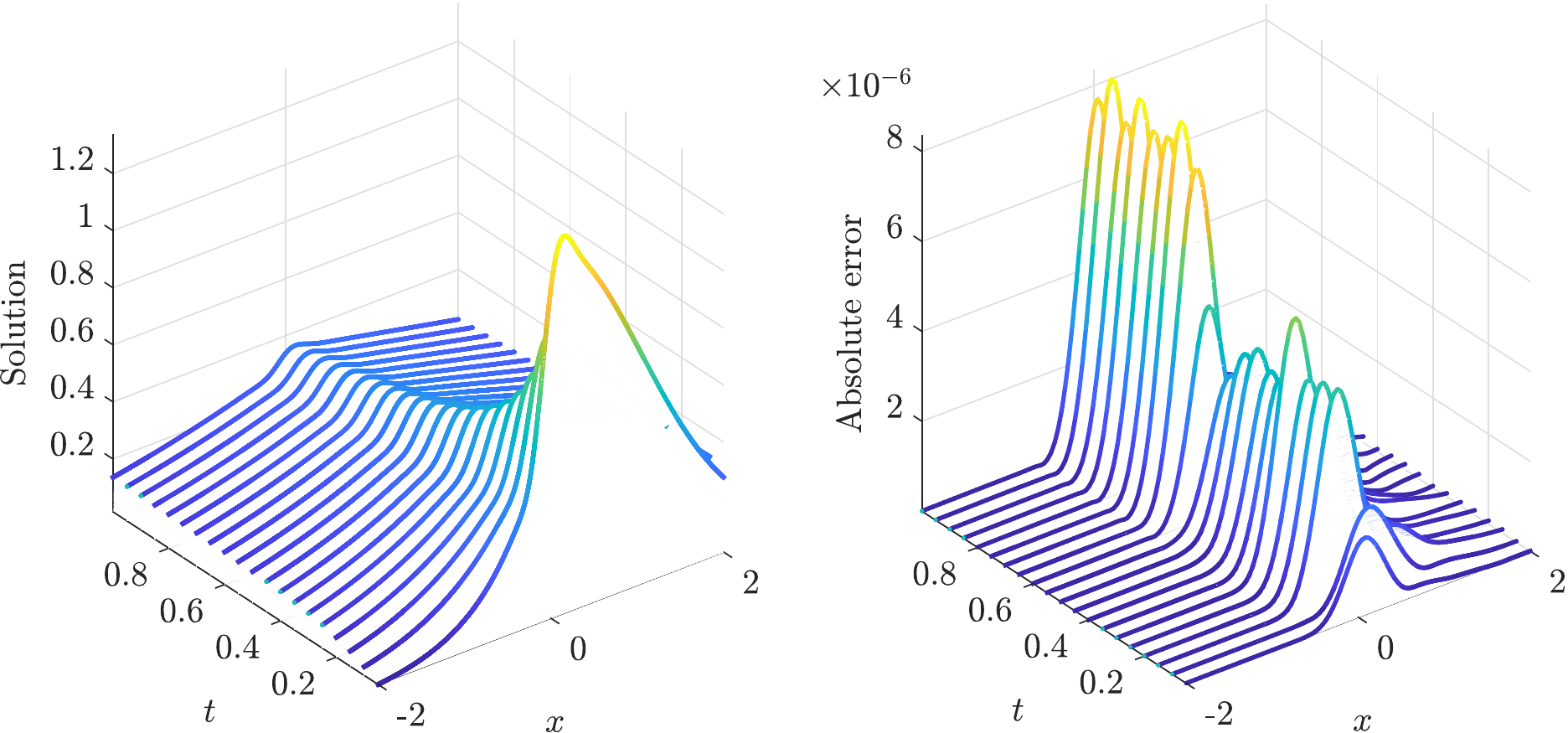}
	\caption{Modulus of the solution (left) and absolute error (right) at time $t=0.05,0.1,\ldots,1$ on the domain $\lgrg{-2,2}$ with $K=50$.}\label{figTBCSolErr}
\end{figure}

Whereas in Figure \ref{figTBCSolErr} we considered the fixed parameter $K=50$, Figure \ref{figTBCNCont1} plots the absolute error of the computed solution in dependence on the number of quadrature points. Here, the left side of the figure shows the solution at time $t=0.5$ with $6000$ time steps and different numbers of spatial grid points (201, 401, 801). For the right side we compute the solution at time $t=0.5$ with $401$ spatial grid points and take a look at different amounts of time steps $N$.
\begin{figure}[htbp]
	\centering
	\includegraphics[width=0.95\linewidth]{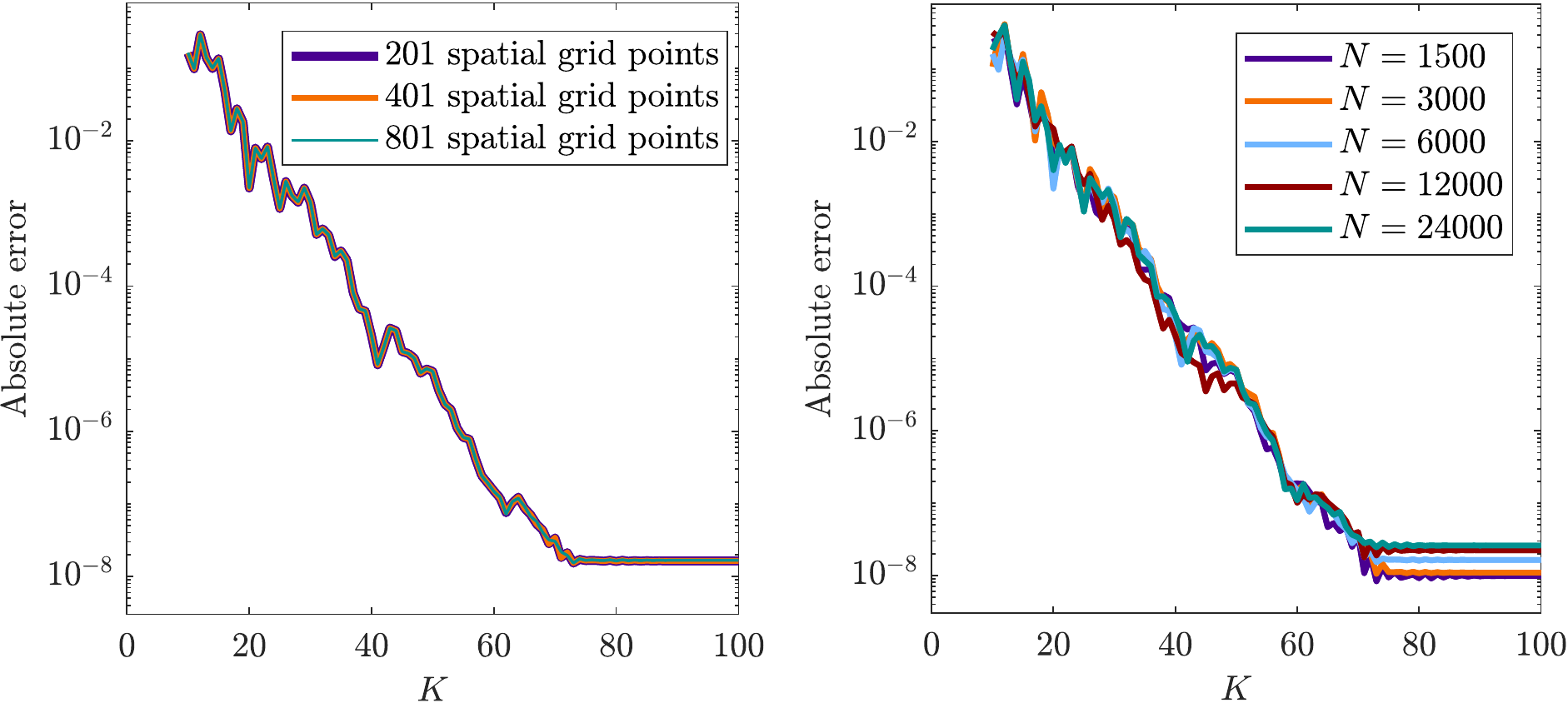}
	\caption{Absolute error at time $t=0.5$ versus $K$ for different pairs of time steps and numbers of spatial grid points. In the left figure we use $N=6000$ and in the right figure 401 grid points for the space discretisation.}\label{figTBCNCont1}
\end{figure}
For the reference solutions we use again $\Lambda=5,\ \kappa=110$ and $J=4\kappa$ and compute them such that the step size of the spatial discretisation as well as the time step size are identical to the ones of the computed solution. Thus, the error that we see in Figure \ref{figTBCNCont1} is dominated by the discretisation of the contour integral \eqref{eqApproxwn}. The exponential convergence that we expected due to Theorem \ref{thmFehlerschatzung} can be clearly seen. Here the error saturation that can be observed is related to the chosen value of $\kappa$.

\section*{Acknowledgement} I thank Achim Sch\"adle for the suggestion to write this paper, the helpful discussions during its preparation and for kindly providing his algorithm for the fast Runge-Kutta approximation of inhomogeneous parabolic equations.

\bibliography{lit}
\bibliographystyle{siamplain}
\end{document}